\documentclass[reqno, 10pt]{amsart}
\usepackage{amssymb}
\usepackage{amsmath}
\usepackage[mathscr]{euscript}
\usepackage{hyperref}
\usepackage{graphicx}
\usepackage{mathabx}
\usepackage{comment}

\usepackage{amssymb}
\usepackage{hyperref}
\RequirePackage[dvipsnames]{xcolor} 
\definecolor{halfgray}{gray}{0.55} 
\definecolor{webgreen}{rgb}{0,0.5,0}
\definecolor{webbrown}{rgb}{.6,0,0} \hypersetup{%
  colorlinks=true, linktocpage=true, pdfstartpage=3,
  pdfstartview=FitV,%
  breaklinks=true, pdfpagemode=UseNone, pageanchor=true,
  pdfpagemode=UseOutlines,%
  plainpages=false, bookmarksnumbered, bookmarksopen=true,
  bookmarksopenlevel=1,%
  hypertexnames=true,
  pdfhighlight=/O,
  urlcolor=webbrown, linkcolor=RoyalBlue,
  citecolor=webgreen, 
  pdftitle={},%
  pdfauthor={},%
  pdfsubject={2000 MAthematical Subject Classification: Primary:},%
  pdfkeywords={},%
  pdfcreator={pdfLaTeX},%
  pdfproducer={LaTeX with hyperref}%
}



\theoremstyle{plain}
\numberwithin{equation}{section}
\newtheorem{theorem}{Theorem}
\newtheorem{Claim}{Claim}

\newtheorem{lemma}{Lemma}

\theoremstyle{definition}
\newtheorem{definition}{Definition}
\newtheorem{remark}{Remark}

\def\Z{\mathbb{Z}}


\begin{document}
\title[Shadowing and hyperbolicity
for delay difference equations]{Shadowing and hyperbolicity for linear delay difference equations}

\author[L. Backes]{Lucas Backes}
\address{\noindent Departamento de Matem\'atica, Universidade Federal do Rio Grande do Sul, Av. Bento Gon\c{c}alves 9500, CEP 91509-900, Porto Alegre, RS, Brazil.}
\email{lucas.backes@ufrgs.br}

\author[D. Dragi\v cevi\'c]{Davor Dragi\v cevi\'c}
\address{Faculty of Mathematics, University of Rijeka, Radmile Matej\v ci\' c 2, 51000 Rijeka, Croatia}
\email{ddragicevic@math.uniri.hr}

\author[M. Pituk]{Mih\'aly Pituk}
\address{Department of Mathematics, University of Pannonia, Egyetem \'ut 10, 8200 Veszpr{\'e}m, Hungary;
HUN--REN--ELTE Numerical Analysis and Large Networks Research Group, Budapest, Hungary}
\email{pituk.mihaly@mik.uni-pannon.hu}

\subjclass[2020]{Primary: 37C50;  Secondary: 37B99}

\keywords{Delay difference equation; shadowing;  
hyperbolicity; exponential dichotomy}

\begin{abstract}
\vskip20pt
It is known that hyperbolic linear delay difference equations are shadowable on the half-line. In this paper, we prove the converse and hence the equivalence between hyperbolicity and the positive shadowing property for the 
following two classes of linear delay difference equations: 
(a)~for nonautonomous 
equations
with finite delays and uniformly bounded compact coefficient operators in 
Banach spaces, (b)~for 
Volterra difference equations with infinite delay in finite dimensional spaces.
\end{abstract}

\maketitle

\section{Introduction}\label{intro}

Let $\mathbb Z$ and $\mathbb C$ denote the set of integers and the set of complex numbers, respectively. For $k\in\mathbb Z$, define $\mathbb Z^+_k=\{\,n\in\mathbb Z:n\geq k\,\}$ and $\mathbb Z^-_k=\{\,n\in\mathbb Z:n\leq k\,\}$.
Throughout the paper, we shall assume as a standing assumption that $(X,|\cdot|)$ is a Banach space. The symbol $\mathcal L(X)$ will denote the space of all bounded linear operators $A\colon X\to X$ equipped with the operator norm, 
$
|A|=\sup_{|x|=1}|Ax|
$
for $A\in\mathcal L(X)$.  

Consider the linear autonomous difference equation
\begin{equation}\label{ade}
	x(n+1)=Ax(n), 
\end{equation}
where $A\in\mathcal L(X)$. Given $\delta\geq0$, by a \emph{$\delta$-pseudosolution} of Eq.~\eqref{ade} on~$\mathbb Z^+_0$, we mean a function $y\colon \mathbb Z^+_0\to X$ such that 
$$
\sup_{n\geq 0}
|y(n+1)-Ay(n)|\leq\delta.
$$
Note that for $\delta=0$ the pseudosolution becomes a true solution of Eq.~\eqref{ade}
on $\mathbb Z^+_0$.
We say that Eq.~\eqref{ade} is \emph{shadowable} on~$\mathbb Z^+_0$ or that it has the \emph{positive shadowing property} if for every $\epsilon>0$ there exists
$\delta>0$ such that for every $\delta$-pseudosolution~$y$ of~\eqref{ade} on~$\mathbb Z^+_0$ there exists a true solution $x$ of~\eqref{ade} on~$\mathbb Z^+_0$ such that
$$
\sup_{n\geq 0}|x(n)-y(n)|\leq\epsilon.
$$
The shadowing of Eq.~\eqref{ade} is closely related to its hyperbolicity. Recall that Eq.~\eqref{ade} is \emph{hyperbolic} if $\sigma(A)$ does not intersect the unit circle $|\lambda|=1$ in~$\mathbb C$, where $\sigma(A)$ denotes the spectrum of~$A$.

In a recent paper~\cite{BCDMP}, Bernardes~\emph{et al.} have studied various shadowing properties of Eq.~\eqref{ade}. Among others, they have shown that if Eq.~\eqref{ade} is hyperbolic, then it is shadowable on~$\mathbb Z^+_0$ (see \cite[Theorem~13]{BCDMP}). Moreover, if the coefficient~$A\in\mathcal L(X)$ in Eq.~\eqref{ade} is a compact operator, then the converse is also true. As usual, an operator $A\colon X\to X$ is \emph{compact} if for every bounded set $S\subset X$ the image $A(S)$ has compact closure in~$X$. Thus, we have the following theorem.
\begin{theorem}\label{motivthm1}
{\rm \cite[Theorem~15]{BCDMP}}
Let $A\in\mathcal L(X)$ be a compact operator. Then the following statements are equivalent.
\vskip5pt
\emph{(i)} Eq.~\eqref{ade} is shadowable on~$\mathbb Z^+_0$;
\vskip3pt

\vskip3pt

\emph{(ii)} Eq.~\eqref{ade} is hyperbolic.	

\end{theorem}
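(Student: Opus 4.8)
The plan is to prove only the implication $(i)\Rightarrow(ii)$, since the reverse implication $(ii)\Rightarrow(i)$ is exactly \cite[Theorem~13]{BCDMP} and uses no compactness. I would establish $(i)\Rightarrow(ii)$ by contraposition: assuming Eq.~\eqref{ade} is \emph{not} hyperbolic, I will produce, for every $\delta>0$, a $\delta$-pseudosolution on $\mathbb Z^+_0$ that cannot be shadowed to within any prescribed accuracy by a true solution, so that Eq.~\eqref{ade} fails the positive shadowing property. (We may assume $X$ is a complex Banach space, passing to the complexification otherwise; shadowability is inherited in the direction needed, and the complexified coefficient is again compact with the same spectrum.)

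\emph{Where compactness enters.} Non-hyperbolicity gives some $\lambda\in\sigma(A)$ with $|\lambda|=1$, in particular $\lambda\neq0$. Since $A$ is compact, so is its Banach-space adjoint $A^*\colon X^*\to X^*$, and $\sigma(A^*)=\sigma(A)$; thus $\lambda$ is a nonzero point of the spectrum of the compact operator $A^*$, hence an \emph{eigenvalue} of $A^*$ by the Riesz theory of compact operators. Fix $\varphi\in X^*\setminus\{0\}$ with $A^*\varphi=\lambda\varphi$, equivalently $\varphi\circ A=\lambda\varphi$, and fix a unit vector $v\in X$ with $\beta:=\varphi(v)\neq0$. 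This replacement of a spectral value on the unit circle by a genuine left eigenvector is the heart of the argument and the only place where the compactness hypothesis is indispensable: for a general $A\in\mathcal L(X)$ such a $\lambda$ may lie in the continuous or residual spectrum and admit no associated eigenfunctional.

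\emph{The construction.} Given $\delta>0$, define $y\colon\mathbb Z^+_0\to X$ by $y(0)=0$ and $y(n+1)=Ay(n)+\delta\lambda^n v$. Then $|y(n+1)-Ay(n)|=\delta|v|=\delta$ for all $n\geq0$, so $y$ is a $\delta$-pseudosolution of Eq.~\eqref{ade} on $\mathbb Z^+_0$. Applying $\varphi$ and using $\varphi\circ A=\lambda\varphi$, the scalar sequence $u(n):=\varphi(y(n))$ satisfies $u(0)=0$ and $u(n+1)=\lambda u(n)+\delta\beta\lambda^n$, whence, by a one-line induction, $u(n)=\delta\beta\,n\,\lambda^{n-1}$, so $|u(n)|=\delta|\beta|\,n\to\infty$. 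On the other hand, every true solution of Eq.~\eqref{ade} on $\mathbb Z^+_0$ has the form $x(n)=A^n x(0)$, so $\varphi(x(n))=\big((A^*)^n\varphi\big)(x(0))=\lambda^n\varphi(x(0))$ has \emph{constant} modulus $|\varphi(x(0))|$. Consequently, for every true solution $x$,
\[
\|\varphi\|\,|x(n)-y(n)|\ \geq\ |\varphi(x(n))-\varphi(y(n))|
=\ \big|\lambda^n\varphi(x(0))-\delta\beta\,n\,\lambda^{n-1}\big|
=\ \big|\lambda\varphi(x(0))-\delta\beta\,n\big|\ \longrightarrow\ \infty
\]
as $n\to\infty$. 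Hence $\sup_{n\geq0}|x(n)-y(n)|=\infty$ for every true solution $x$; in particular no true solution $\epsilon$-shadows $y$. Since $\delta>0$ was arbitrary, Eq.~\eqref{ade} is not shadowable on $\mathbb Z^+_0$, which is the desired contrapositive.

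I do not anticipate a genuine obstacle beyond making the spectral input precise: justifying that $\lambda$ is an eigenvalue of $A^*$ (Schauder's theorem, the identity $\sigma(A)=\sigma(A^*)$, and the Riesz theory) and verifying the elementary equivalence $A^*\varphi=\lambda\varphi\Leftrightarrow\varphi\circ A=\lambda\varphi$; everything after that is a direct computation with the explicitly constructed pseudosolution. One should also note that the argument actually proves the stronger assertion that, when $A$ is compact and Eq.~\eqref{ade} is not hyperbolic, there exist $\delta$-pseudosolutions with $\delta$ arbitrarily small that are shadowed by \emph{no} true solution at all, not merely by none within a prescribed $\epsilon$.
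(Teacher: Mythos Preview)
Your argument is correct. Note that the paper does not itself prove Theorem~\ref{motivthm1}; it is quoted from~\cite{BCDMP} as motivation, so strictly speaking there is no in-paper proof to compare against. The paper's own contribution that specializes to this statement is Theorem~\ref{mainresult1}, and its proof of $(i)\Rightarrow(ii)$ takes a very different route: shadowing is first converted into a Perron-type admissibility property, then Sch\"affer's lemma on covariant sequences together with eventual compactness of the solution operator shows the stable subspace is closed of finite codimension, and finally an exponential dichotomy is built by hand through a sequence of growth estimates. Your direct contrapositive---extracting a unimodular eigenvalue of~$A^*$ via Riesz--Schauder theory and pairing a resonant pseudosolution against the eigenfunctional to force linear growth of a scalar projection---is far more elementary, and in fact it is essentially the same device the paper uses for its \emph{other} main result, Theorem~\ref{mainresult2} on Volterra equations: compare your recursion $u(n+1)=\lambda u(n)+\delta\beta\lambda^n$ with the paper's~\eqref{newueq}--\eqref{newestueq}. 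The trade-off is clear: your method exploits the autonomous structure and the fact that compactness forces nonzero spectrum to be point spectrum, whereas the Theorem~\ref{mainresult1} machinery is heavier but survives in the nonautonomous setting where no single operator spectrum is available.
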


\begin{remark}\label{rem1}
As noted above, the implication (ii) $\Rightarrow$ (i) in Theorem~\ref{motivthm1} is true if we assume merely that $A\in\mathcal L(X)$. The compactness of $A\in\mathcal L(X)$ is important only for the validity of the converse implication (i) $\Rightarrow$ (ii) (see~\cite[Remark~14]{BCDMP}).
\end{remark}

Now let us consider the finite dimensional case $X=\mathbb C^d$, where $d$ is a positive integer and $\mathbb C^d$ denotes the $d$-dimensional space complex column vectors. Then, the space $\mathcal L(\mathbb C^d)$ can be identified with $\mathbb C^{d\times d}$, the space of $d\times d$ matrices with complex entries. Since linear operators between finite dimensional spaces are compact and the spectrum of a square matrix $A\in\mathbb C^{d\times d}$ consists of the roots of its characteristic equation
\begin{equation}\label{ache}
\det(\lambda E-A)=0,
\end{equation}
where $E\in\mathbb C^{d\times d}$ is the unit matrix, in this case, Theorem~\ref{motivthm1} can be reformulated as follows.

\begin{theorem}\label{motivthm2}
Let $A\in\mathbb C^{d\times d}$. Then the following statements are equivalent.
\vskip5pt
\emph{(i)} Eq.~\eqref{ade} is shadowable on~$\mathbb Z^+_0$;
\vskip3pt

\vskip3pt

\emph{(ii)} The characteristic equation~\eqref{ache} has no root on the unit circle $|\lambda|=1$.

\end{theorem}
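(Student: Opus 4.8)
The plan is to obtain Theorem~\ref{motivthm2} as an immediate specialization of Theorem~\ref{motivthm1} to the finite dimensional setting $X=\mathbb C^d$. Three observations are needed, none of them difficult, and the proof amounts to unwinding definitions.

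First, one checks that the hypothesis of Theorem~\ref{motivthm1} is automatically satisfied: every $A\in\mathbb C^{d\times d}$, regarded as a bounded linear operator on $\mathbb C^d$, is compact. Indeed, if $S\subset\mathbb C^d$ is bounded then $A(S)$ is bounded, so $\overline{A(S)}$ is closed and bounded in a finite dimensional space and hence compact by the Heine--Borel theorem. Thus Theorem~\ref{motivthm1} applies with $X=\mathbb C^d$, and it gives the equivalence of statement~(i) with the hyperbolicity of Eq.~\eqref{ade}.

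Second, one translates the hyperbolicity condition into the language of the characteristic equation. By definition, Eq.~\eqref{ade} is hyperbolic precisely when $\sigma(A)$ does not meet the unit circle $|\lambda|=1$. For a matrix $A\in\mathbb C^{d\times d}$ the spectrum $\sigma(A)$ coincides with the set of eigenvalues of $A$, which in turn is exactly the set of roots of the characteristic polynomial $\det(\lambda E-A)$, that is, the set of solutions of Eq.~\eqref{ache}. Hence hyperbolicity of Eq.~\eqref{ade} is equivalent to statement~(ii). Chaining the two equivalences yields (i)$\iff$(ii).

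The only point requiring a word of care is that shadowability on $\mathbb Z^+_0$ for Eq.~\eqref{ade} does not depend on the choice of norm on $\mathbb C^d$: since all norms on $\mathbb C^d$ are equivalent, passing from one norm to another merely rescales the constants $\epsilon$ and $\delta$ in the definition, so statement~(i) is unambiguous and the invocation of Theorem~\ref{motivthm1} is legitimate for any fixed norm. Beyond this, there is no genuine obstacle — the entire content of Theorem~\ref{motivthm2} is carried by Theorem~\ref{motivthm1} together with the elementary fact that the spectrum of a matrix is its set of characteristic roots.
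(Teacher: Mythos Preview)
Your proposal is correct and follows exactly the paper's own approach: the paper states Theorem~\ref{motivthm2} as a direct reformulation of Theorem~\ref{motivthm1} in the finite-dimensional case, using precisely the two observations you identify (compactness of finite-rank operators and the identification of $\sigma(A)$ with the roots of $\det(\lambda E-A)$). Your additional remark on norm equivalence is harmless and correct, though the paper does not bother to mention it.
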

Our aim in this paper is to extend Theorems~\ref{motivthm1} and~\ref{motivthm2} to more general classes of linear difference equations with delay. 

In Sec.~\ref{nefd}, we will generalize Theorem~1 to the nonautonomous linear difference equation with finite delays
\begin{equation}\label{nef}
x(n+1)=\sum_{j=0}^r A_j(n)x(n-j),	
\end{equation}
where $r\in\mathbb Z^+_0$ is the maximum delay and the coefficients $A_j(n)\in\mathcal L(X)$, $0\leq j\leq r$, 
$n\in\mathbb Z^+_0$, are compact linear operators which 
are uniformly bounded, i.e., there exists $K\geq1$ such that
\begin{equation}\label{unibdd}
	|A_j(n)|\leq K,\qquad n\in\mathbb Z^+_0,\quad 0\leq j\leq r.
\end{equation}
The main result of Sec.~\ref{nefd} is formulated in Theorem~\ref{mainresult1}, which may be viewed as a discrete analogue of our recent shadowing theorem for delay differential equations in~$\mathbb R^d$~\cite[Theorem~2.2]{BDP}. 
 It says that, under the above hypotheses, Eq.~\eqref{nef} is shadowable on~$\mathbb Z^+_0$ if and only if it has an exponential dichotomy which is
 a nonautonomous variant of hyperbolicity. To the best of our knowledge, this result is new even for ordinary difference equations ($r=0$). We note that in the particular case when $r=0$ and $X$ is finite-dimensional, a version of this result was established in~\cite{BD0}  (see~\cite[Corollary 2]{BD0} and~\cite[Proposition 4]{BD0}).

 In Sec.~\ref{veid}, we will extend Theorem~\ref{motivthm2} to the linear Volterra difference equation with infinite delay
 \begin{equation}\label{vde}
	x(n+1)=\sum_{j=-\infty}^n A(n-j)x(j),
\end{equation}
where $A\colon\mathbb Z^+_0\to\mathbb C^{d\times d}$ satisfies
\begin{equation}\label{gammacond}
\sum_{j=0}^\infty|A(j)|e^{\gamma j}<\infty\qquad\text{for some $\gamma>0$}.
\end{equation}
The \emph{characteristic equation} of Eq.~\eqref{vde} has the form
\begin{equation}\label{chareq}
\det\varDelta(\lambda)=0	,\qquad|\lambda|>e^{-\gamma},
\end{equation}
where
\begin{equation}\label{charfunc}
	\varDelta(\lambda)=\lambda E-\sum_{j=0}^\infty\lambda^{-j}A(j),
	\qquad|\lambda|>e^{-\gamma}.
\end{equation}
The main result of Sec.~\ref{veid}, Theorem~\ref{mainresult2}, says that in the natural (infinite dimensional) phase space~$\mathcal B_\gamma$ defined below Eq.~\eqref{vde} is shadowable on~$\mathbb Z^+_0$ if and only if its characteristic equation~\eqref{chareq} has no root on the unit circle $|\lambda|=1$.

The fact that nonautonomous linear delay difference equations, including~\eqref{nef} and~\eqref{vde}, are shadowable whenever they are hyperbolic follows from \cite[Theorem~1]{DP}. 
Therefore, in both cases~\eqref{nef} and~\eqref{vde}, we need to prove only the converse result.  
In the case of the nonautonomous equation with finite delays~\eqref{nef}, the proof follows similar lines as the proof our continuous time result~\cite[Theorem~2.2]{BDP} with nontrivial modifications because, instead of~$\mathbb R^d$, we consider Eq.~\eqref{nef} in a general infinite dimensional Banach space. It is based on the eventual compactness of the solution operator, combined with 
 an input-output technique~\cite{BDV} and Sch\"affer's result about the existence of regular covariant subspaces of linear difference equation in a Banach space~\cite{Schaffer}. A similar argument for the Volterra equation with infinite delay~\eqref{vde} does not apply since its solution operator is not eventually compact. In this case the proof will be based on the duality between Eq.~\eqref{vde} and its formal adjoint equation which has been established by Matsunaga~\emph{et.~al.}~\cite{MMNN}.

\section{Shadowing of nonautonomous linear difference equation with finite delays}\label{nefd}

In this section, we consider the shadowing of
the nonautonomous linear difference equation with finite delays~\eqref{nef}, where $A_j\colon\mathbb Z^+_0\to\mathcal L(X)$, $0\leq j\leq r$, satisfy condition~\eqref{unibdd}.
The \emph{phase space} for Eq.~\eqref{nef} is $(\mathcal B_r,\|\cdot\|)$, where $\mathcal B_r$ is the set of all functions $\phi\colon[-r,0]\cap\mathbb Z\to X$ and
\begin{equation*}
\|\phi\|=\max_{-r\leq\theta\leq0}|\phi(\theta)|,\qquad\phi\in\mathcal B_r.
\end{equation*}
Eq.~\eqref{nef} can be written equivalently in a form of a \emph{functional difference equation}
\begin{equation}\label{LDE}
x(n+1)=L_n (x_n) ,
\end{equation}
where the \emph{solution segment} $x_n\in\mathcal B_r$ is defined by 
\begin{equation*}
	x_n(\theta)=x(n+\theta),\qquad \theta \in[-r,0]\cap\mathbb Z,
	\end{equation*}
	 and $L_n\colon\mathcal B_r\to X$ is a bounded linear functional defined by
	 \begin{equation*}
	 	L_n(\phi)=\sum_{j=0}^r A_j(n)\phi(-j),\qquad \phi\in\mathcal B_r,\quad n\in\mathbb Z^+_0,\quad 0\leq j\leq r.
	 \end{equation*} 
In view of~\eqref{unibdd}, we have that
\begin{equation}\label{funibdd}
	\|L_n\|\leq M:=(r+1)K,\qquad n\in\mathbb Z^+_0.
\end{equation}	 
	 
Given $m\in\mathbb Z^+_0$ and $\phi\in\mathcal B_r$, there exists a unique function $x\colon\mathbb Z^+_{m-r}\to X$ satisfying Eq.~\eqref{nef} 
and the \emph{initial condition} $x(m+\theta)=\phi(\theta)$ for $\theta\in[-r,0]\cap\mathbb Z$. We shall call~$x$ the \emph{solution of Eq.~\eqref{nef} with initial value $x_m=\phi$}.
 By a \emph{solution of Eq.~\eqref{nef} on~$\mathbb Z_m^+$}, we mean a solution~$x$ with initial value $x_m=\phi$ for some~$\phi\in\mathcal B_r$.
 
 For each $n, m\in\mathbb Z^+_0$ with $n\geq m$, the \emph{solution operator} $T(n,m)\colon\mathcal B_r\to\mathcal B_r$ is defined by 
 $T(n,m)\phi=x_n$ for $\phi\in\mathcal B_r$, where $x$ is the unique solution of Eq.~\eqref{nef} with initial value $x_m=\phi$.
 It is easily seen that for all $n,k,m\in\mathbb Z^+_0$ with $n\geq k\geq m$,   \begin{gather}
 T(m,m)=I,\label{ev1}	\\
 T(n,m)=T(n,k)T(k,m),\label{ev2}\\
 \|T(n,m)\|\leq e^{\omega(n-m)},\label{ev3}
 \end{gather}
where $I$ denotes the identity operator on~$\mathcal B_r$ and $\omega=\log (M(1+r))$.

 Now we can introduce the definitions of shadowing and exponential dichotomy for Eq.~\eqref{nef} (equivalently, \eqref{LDE}).
\begin{definition}\label{shadow}
We say that Eq.~\eqref{nef} is \emph{shadowable on~$\mathbb Z^+_0$} if for each $\epsilon >0$ there exists $\delta>0$ such that for every function $y\colon \mathbb Z^+_{-r}\to X$ satisfying
\[
\sup_{n\geq 0}|y(n+1)-L_n(y_n)| \le \delta, 
\]
there exists a solution $x$ of~\eqref{nef} on~$\mathbb Z^+_0$ such that  
\[
\sup_{n\geq 0} \|x_n-y_n\| \le \epsilon.
\]
\end{definition}

\begin{definition}\label{ed}
We say that Eq.~\eqref{nef} admits an \emph{exponential dichotomy (on~$\mathbb Z^+_0$)} if there exist a sequence of projections $(P_n)_{n\in\mathbb Z^+_0}$ on~$\mathcal B_r$ and constants $D, \lambda >0$ with the following properties:
\begin{itemize}
\item for $n,m\in \mathbb Z^+_0$ with $n\geq m$, 
\begin{equation}\label{pro}
P_n T(n,m)=T(n,m)P_m,
\end{equation}
and $T(n,m)\rvert_{\operatorname{ker} P_m} \colon \operatorname{ker} P_m\to
\operatorname{ker} P_n$ is onto and invertible;
\item for $n,m\in \mathbb Z^+_0$ with $n\geq m$, 
\begin{equation}\label{eq: def est stable}
\|T(n,m)P_m\| \le De^{-\lambda (n-m)};
\end{equation}
\item for $n,m\in \mathbb Z^+_0$ with $n\leq m$,
\begin{equation}\label{eq: def est unstable}
\|T(n,m)Q_m\| \le De^{-\lambda (m-n)},
\end{equation}
where $Q_m=I-P_m$ and $T(n,m):=\left (T(m,n)\rvert_{\operatorname{ker} P_n} \right )^{-1}$.
\end{itemize}
\end{definition}

The main result of this section is the following shadowing theorem for Eq.~\eqref{nef}.
\begin{theorem}\label{mainresult1}
Suppose that the coefficients $A_j(n)\in\mathcal L(X)$, $n\in\mathbb Z^+_0$, $0\leq j\leq r$, of Eq.~\eqref{nef} are compact operators 
satisfying condition~\eqref{unibdd}. Then, the following statements are equivalent.
\vskip5pt
\emph{(i)} Eq.~\eqref{nef} is shadowable on~$\mathbb Z^+_0$;
\vskip3pt

\vskip3pt

\emph{(ii)} Eq.~\eqref{nef} admits an exponential dichotomy.

\end{theorem}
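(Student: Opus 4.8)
The implication (ii) $\Rightarrow$ (i) is already available: as noted in the introduction, it follows from \cite[Theorem~1]{DP}, since an exponential dichotomy is precisely the form of hyperbolicity under which nonautonomous delay difference equations are shadowable. So the entire work is in proving (i) $\Rightarrow$ (ii): assuming Eq.~\eqref{nef} is shadowable on $\mathbb Z^+_0$, I must manufacture a family of projections $(P_n)$ and constants $D,\lambda>0$ satisfying \eqref{pro}, \eqref{eq: def est stable}, \eqref{eq: def est unstable}. The plan is to follow the strategy of \cite[Theorem~2.2]{BDP}, transposed from $\mathbb R^d$ to a general Banach space and from continuous to discrete time, which forces the use of the eventual compactness of the solution operator.

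\emph{Step 1: reduce shadowing to an input--output/admissibility statement.} First I would reformulate the shadowing hypothesis in functional-analytic terms on the space of bounded sequences. Working with the evolution family $T(n,m)$ on $\mathcal B_r$, shadowability should be shown equivalent to the surjectivity (or bounded solvability) of the operator $y \mapsto (y(n+1)-L_n(y_n))_n$ acting between appropriate spaces of bounded sequences, i.e. to an admissibility property of the pair $(\ell^\infty,\ell^\infty)$ for Eq.~\eqref{nef}. This is the step where the input--output technique of \cite{BDV} enters: one translates ``every bounded $\delta$-pseudosolution is uniformly close to a genuine solution'' into ``the difference operator associated with \eqref{LDE} has a bounded right inverse on $\ell^\infty(\mathcal B_r)$ modulo the space of genuine bounded solutions.''

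\emph{Step 2: from admissibility to a dichotomy of the evolution family.} Once the input--output characterization is in place, I would invoke the Banach-space admissibility-implies-dichotomy machinery for discrete evolution families. The subtlety in infinite dimensions is that admissibility alone does not give invertibility of $T(n,m)$ on the unstable part; this is exactly where the hypothesis that each $A_j(n)$ is \emph{compact} (and uniformly bounded, \eqref{unibdd}) is used: by \eqref{ev2} and the structure of $L_n$, the solution operator $T(n,m)$ is \emph{eventually compact} (for $n-m \ge r+1$, say), so its ``unstable'' spectral part lives on a finite-dimensional-like piece on which invertibility is automatic. Combining eventual compactness with the admissibility from Step~1, I would extract a splitting $\mathcal B_r = \Ima P_m \oplus \Ker P_m$ at each $m$, with the required exponential estimates \eqref{eq: def est stable}--\eqref{eq: def est unstable} and the invariance \eqref{pro}, using Sch\"affer's theorem \cite{Schaffer} on the existence of regular covariant subspaces to guarantee that $\Ker P_m$ is genuinely complemented and that $T(n,m)$ restricts to an isomorphism $\Ker P_m \to \Ker P_n$.

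\emph{Main obstacle.} I expect Step~2 — producing the unstable subspace and its invertibility in the \emph{infinite-dimensional} setting — to be the hard part. In $\mathbb R^d$ (as in \cite[Theorem~2.2]{BDP}) finite-dimensionality handles complementation and invertibility for free; here one must lean on eventual compactness to force the ``non-contracting'' directions to behave finite-dimensionally, and on Sch\"affer's regular-covariant-subspace result to close the gap. A secondary technical point is bookkeeping with the delay: the natural pseudo-orbit in the sense of Definition~\ref{shadow} is a function $y\colon\mathbb Z^+_{-r}\to X$, and one must carefully pass between such functions and their segments $y_n\in\mathcal B_r$ so that the $\delta$-pseudosolution estimate transfers correctly to a bound on $y(n+1)-L_n(y_n)$ at the level of the evolution family; this is routine but must be done with care because of the overlap of consecutive segments.
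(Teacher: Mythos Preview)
Your plan is correct and matches the paper's approach essentially step for step: the paper first extracts a Perron-type admissibility property from shadowing (your Step~1), then uses eventual compactness of $T(n,m)$ for $n\ge m+r+1$ together with Sch\"affer's lemma to show the stable subspace $\mathcal S(0)$ is closed of finite codimension, builds the unstable complement $\mathcal U(n)=T(n,0)\mathcal U$, and finally derives the exponential estimates by explicit weighted-sum test sequences (your Step~2). The ``main obstacle'' you flag---complementation and invertibility on the unstable part in infinite dimensions---is exactly where the paper spends most of its effort, and your identification of eventual compactness plus Sch\"affer as the key tools is on target.
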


\begin{remark}\label{rem2}
It is well-known that the autonomous linear equation~\eqref{ade} admits an exponential dichotomy if and only if the spectrum $\sigma(A)$ does not intersect the unit circle $|\lambda|=1$. Therefore, Theorem~\ref{mainresult1} is a generalization of Theorem~1 to the nonautonomous delay difference equation~\eqref{nef}. Its conclusion is new even for ordinary difference equations ($r=0$).
\end{remark}

\begin{remark}\label{rem3}
The implication (ii) $\Rightarrow$ (i) in Theorem~\ref{mainresult1} is a consequence of \cite[Theorem~1]{DP}
with $f=0$, $c=0$ and $\mu=1$ which does not require the compactness of the coefficients. Thus, this implication is true even without the compactness assumption. However, for the validity of the converse implication (i) $\Rightarrow$ (ii)  
the compactness of the coefficient operators of Eq.~\eqref{nef} is essential (see Remark~\ref{rem1}).
\end{remark}

\begin{remark}
It follows from~\eqref{eq: def est stable} and~\eqref{eq: def est unstable} that if Eq.~\eqref{nef} admits an exponential dichotomy, then the solution operator $T(m, n)$ of Eq.~\eqref{nef} exhibits the (one-sided) domination property in the sense of~\cite[p.~2]{Quas}. In~\cite[Theorem 1.2]{Quas},  the authors have formulated sufficient conditions under which the solution operator associated with a nonautonomous difference equation (without delay)
\[
x_{n+1}=A_n x_n, \qquad n\in \mathbb Z,
\]
on an arbitrary Banach space $X$
exhibits the domination property. We stress that no compactness assumptions on the coefficients $A_n$, $n\in \mathbb Z$,
are assumed. These sufficient conditions are expressed in terms of the so-called \emph{uniform singular valued gap property} (see~\cite[(SVG)]{Quas}). For related results in the case of linear cocycles over topological dynamical systems, we refer to the works of Bochi and Gourmelon~\cite{BN} and Blumenthal and Morris~\cite{BM}, where the connection between this type of results and the Oseledets multiplicative ergodic theorem is discussed.
Our Theorem~\ref{mainresult1} provides a characterization of the more restrictive notion of uniform exponential dichotomy which is expressed in terms of the shadowing property instead of the singular values.
\end{remark}

\begin{proof}[Proof of Theorem~\ref{mainresult1}]
	
As noted 
in Remark~\ref{rem3},
 we need to prove only the implication (i) $\Rightarrow$ (ii). Suppose that Eq.~\eqref{nef} is shadowable on~$\mathbb Z^+_0$. We will show that it admits an exponential dichotomy. We split  the proof into several auxiliary results which we now briefly describe.
 
 In Claim~\ref{LEM1}, we show that the shadowing property implies the so-called Perron property which guarantees that for each bounded function $z\colon \mathbb Z_0^+\to X$ the nonhomogeneous equation~\eqref{adm} has at least one bounded solution $x\colon \mathbb Z_0^+\to X$. 
 
         The next four claims are preparatory results for the proof of the crucial Claim~\ref{covariance} which shows that the subspace $\mathcal S(0)$ of those initial functions in~$\mathcal B_r$ which generate bounded solutions is closed and complemented in~$\mathcal B_r$.
 Claims~\ref{INV} and~\ref{subcomplete} are rather simple observations, while Claim~\ref{ee} is a straightforward consequence of Claim~\ref{LEM1}. A more involved argument is needed for the proof of Claim~\ref{compcrit} which asserts that the solution operator $T(n, m)$ of Eq.~\eqref{nef} is a compact operator on $\mathcal B_r$ whenever $n\ge m+r+1$. The proof of Claim~\ref{covariance} follows directly from  Claims~\ref{INV}, \ref{ee}, \ref{subcomplete} and~\ref{compcrit} by applying an abstract result from~\cite{Schaffer} formulated in Lemma~\ref{schaef}. 
 
 As a consequence of Claim~\ref{covariance}, we are able to construct the unstable subspace $\mathcal U$ at time $n=0$ as a topological complement of~$\mathcal S(0)$  (see~\eqref{SPLIT}), and we can revisit the Perron property established in Claim~\ref{LEM1}. More precisely, in Claim~\ref{LEM2}, we show that for each bounded $z\colon \Z_0^+\to X$ there exists a \emph{unique} bounded solution $x\colon \Z_0^+\to X$ of Eq.~\eqref{adm} with $x_0\in \mathcal U$. Moreover, the supremum norm of~$x$ can be controlled by the supremum norm of~$z$ (see~\eqref{cgt}).  

In the next step, we construct the unstable subspace~$\mathcal U(n)$ at each time $n\in \mathbb Z^+$. In Claims~\ref{lem: invertibility} and~\ref{splitting}, we prove that $T(n, m)\rvert_{\mathcal U(m)}\colon \mathcal U(m)\to \mathcal U(n)$ is an isomorphism whenever $n\ge m$ and the phase space~$\mathcal B_r$ splits into stable and unstable subspaces at each moment $n\in \mathbb Z^+$. Note that both claims are consequences of Claim~\ref{LEM2}.

 The desired exponential estimates along the stable and unstable directions are obtained in Claims~\ref{exp est stable} and~\ref{exp est unstable}, respectively. As a preparation for the proofs of these results,  in Claims~\ref{C} and~\ref{C-U}, we show that the dynamics along the stable and unstable directions is uniformly bounded forward and backward in time, respectively.
 These results also rely on Claim~\ref{LEM2}. 

 Finally, in Claim~\ref{Claim: projectionsbounded}, we prove that there is a uniform bound for the norms of the projections onto the stable subspaces along the unstable ones. 
 
 We now proceed with the details.

\begin{Claim}\label{LEM1}
Eq.~\eqref{nef} has the following \emph{Perron type property}:
for each bounded function $z\colon \mathbb Z^+_0\to X$, 
there exists a bounded function $x\colon \mathbb Z^+_{-r}\to X$ which satisfies
\begin{equation}\label{adm}
x(n+1)=L_n(x_n)+z(n), \qquad n\in \mathbb Z^+_0.
\end{equation}
\end{Claim}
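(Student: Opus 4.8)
The plan is to deduce the Perron type property from the shadowing hypothesis by realizing the inhomogeneous equation \eqref{adm} as a controlled perturbation of \eqref{nef}. Fix a bounded function $z\colon\mathbb Z^+_0\to X$, say $\sup_{n\geq0}|z(n)|=:c<\infty$; by scaling we may assume $c$ is as small as we like, and in particular that $c\leq\delta$ for the $\delta$ associated by shadowing to some fixed $\epsilon$ (say $\epsilon=1$). First I would build a ``partial'' solution of \eqref{adm}: define $y\colon\mathbb Z^+_{-r}\to X$ by setting $y(\theta)=0$ for $\theta\in[-r,0]\cap\mathbb Z$ and then recursively $y(n+1)=L_n(y_n)+z(n)$ for $n\geq0$. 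This $y$ satisfies $y(n+1)-L_n(y_n)=z(n)$, so $\sup_{n\geq0}|y(n+1)-L_n(y_n)|=c\leq\delta$, i.e.\ $y$ is a $\delta$-pseudosolution in the sense of Definition~\ref{shadow}. (Note $y$ need not be bounded — the recursion involves $\|L_n\|\leq M$ — but that does not matter for this step.)

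Next I would apply the shadowing property: there is a solution $x$ of \eqref{nef} on $\mathbb Z^+_0$ with $\sup_{n\geq0}\|x_n-y_n\|\leq 1$. This is precisely the conclusion we want \emph{except} that it gives a bounded solution of the \emph{homogeneous} equation close to $y$, whereas we want a bounded solution of the \emph{inhomogeneous} equation \eqref{adm}. The fix is to pass to the difference $w:=y-x$. Since $x$ solves \eqref{nef} and $y$ solves \eqref{adm}, linearity gives $w(n+1)=L_n(w_n)+z(n)$ for $n\geq0$, so $w$ is a solution of \eqref{adm} on $\mathbb Z^+_{-r}$. Moreover $|w(n+\theta)|=|y(n+\theta)-x(n+\theta)|\leq\|y_n-x_n\|\leq 1$ for all $n\geq0$ and $\theta\in[-r,0]\cap\mathbb Z$, hence $\sup_{n\geq-r}|w(n)|\leq 1<\infty$. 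Thus $w$ is the desired bounded function satisfying \eqref{adm}, and after undoing the initial scaling of $z$ we obtain the claim for arbitrary bounded $z$ (the bound on $w$ scales linearly with $c$, though only boundedness, not a quantitative estimate, is asserted in the statement).

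The only subtlety — and the reason this works at all — is the mismatch between ``pseudosolution of the homogeneous equation'' and ``solution of the inhomogeneous equation'': one must resist the temptation to take $x$ itself as the answer. The trick of subtracting the shadowing solution from the explicitly constructed pseudosolution is standard in this circle of ideas, and I expect no real obstacle here; the genuine work of the theorem lies in the \emph{subsequent} claims, where this Perron (admissibility) property must be upgraded — via the input--output technique of \cite{BDV}, the eventual compactness of the solution operators $T(n,m)$, and Sch\"affer's regular-subspace result \cite{Schaffer} — into a full exponential dichotomy. I would also remark in passing that this step uses only shadowability and linearity, not compactness of the coefficients; compactness enters later.
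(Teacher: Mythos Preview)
Your proposal is correct and follows essentially the same route as the paper: scale $z$ so that its sup-norm is at most the shadowing $\delta$ (the paper does this explicitly via the factor $\delta/\|z\|_\infty$), build the recursive solution $y$ of the inhomogeneous equation with zero initial segment, shadow it by a homogeneous solution, and take the difference. Your parenthetical remarks---that $y$ itself need not be bounded, and that compactness is not used here---are accurate and match the paper's treatment.
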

\begin{proof}[Proof of Claim~\ref{LEM1}]
Let $z\colon \mathbb Z^+_0\to X$ be an arbitrary bounded function. If $z(n)=0$ for every $n\in \mathbb Z^+_0$, then~\eqref{adm} is trivially satisfied with $x(n)=0$ for every $n\in \mathbb Z^+_{-r}$. Now suppose that $z(n)\neq0$ for some $n\in \mathbb Z^+_0$ so that $\|z\|_\infty:=\sup_{n\in \mathbb Z^+_0}|z(n)|>0$.
Choose a constant $\delta >0$ corresponding to the choice of $\epsilon=1$ in Definition~\ref{shadow}. Take an arbitrary solution $y\colon \mathbb Z^+_{-r}\to X$ of the nonhomogeneous equation
\[
y(n+1)=L_n(y_n)+\frac{\delta}{\|z\|_\infty}z(n), \qquad n\in \mathbb Z^+_0.
\]
(The unique solution~$y$ with initial value $y_0=0$ is sufficient for our purposes.) Since
\[
\sup_{n\geq 0}|y(n+1)-L_n(y_n)| \le \delta,
\]
according to Definition~\ref{shadow}, there exists a solution~$\tilde x$ 
 of Eq.~\eqref{LDE} on $\mathbb Z^+_0$ such that
\[
\sup_{n\geq{-r}}|\tilde x(n)-y(n)|=\sup_{n\geq 0}\|\tilde x_n-y_n\| \le 1.
\]
Define a function $x\colon \mathbb Z^+_{-r}\to X$ by 
\[
x(n):=\frac{\|z\|_\infty}{\delta}(\,y(n)-\tilde x(n)\,), \qquad n\in \mathbb Z^+_{-r}.
\]
It can be easily verified that $x$ satisfies~\eqref{adm} and 
\[
\sup_{n\in \mathbb Z^+_{-r}}|x(n)|\le \frac{\|z\|_\infty}{\delta}<\infty.
\]
The proof of the claim is complete.
\end{proof}

For each $m\in \mathbb Z^+_0$, define
\[
\mathcal S(m)=\bigl \{ \phi\in \mathcal B_r: \sup_{n\ge m}\|T(n,m)\phi \|<\infty \bigr \}.
\]
Clearly, $\mathcal S(m)$ is a subspace of $\mathcal B_r$ which will be called the \emph{stable subspace of Eq.~\eqref{LDE} at~$m\in \mathbb Z^+_0$}.

\begin{Claim}\label{INV}
For each $n,m\in \mathbb Z^+_0$ with $n\ge m $, we have that 
\[
[\,T(n,m)\,]^{-1}(\mathcal S(n))=\mathcal S(m).
\]
\end{Claim}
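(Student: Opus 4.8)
The plan is to verify the two set inclusions separately, using only the cocycle identity~\eqref{ev2} and the exponential bound~\eqref{ev3}. Throughout, $[\,T(n,m)\,]^{-1}(\mathcal S(n))$ is to be read as the set-theoretic preimage $\{\phi\in\mathcal B_r : T(n,m)\phi\in\mathcal S(n)\}$, since $T(n,m)$ need not be injective on all of $\mathcal B_r$; the identity to be proved is thus simply that $T(n,m)\phi$ is bounded along $\mathbb Z^+_n$ if and only if $\phi$ is bounded along $\mathbb Z^+_m$ (under $T(\cdot,\cdot)$).

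For the inclusion $\mathcal S(m)\subseteq[\,T(n,m)\,]^{-1}(\mathcal S(n))$, I would start from $\phi\in\mathcal S(m)$, so that $\sup_{k\ge m}\|T(k,m)\phi\|<\infty$. By~\eqref{ev2}, $T(k,m)\phi=T(k,n)\bigl(T(n,m)\phi\bigr)$ for every $k\ge n$, hence $\sup_{k\ge n}\|T(k,n)\bigl(T(n,m)\phi\bigr)\|\le\sup_{k\ge m}\|T(k,m)\phi\|<\infty$, which is precisely the statement that $T(n,m)\phi\in\mathcal S(n)$.

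For the reverse inclusion, I would take $\phi$ with $T(n,m)\phi\in\mathcal S(n)$; then, again by~\eqref{ev2}, $\sup_{k\ge n}\|T(k,m)\phi\|=\sup_{k\ge n}\|T(k,n)\bigl(T(n,m)\phi\bigr)\|<\infty$. The remaining indices form the finite block $m\le k\le n$, on which~\eqref{ev3} gives $\|T(k,m)\phi\|\le e^{\omega(k-m)}\|\phi\|\le e^{\omega(n-m)}\|\phi\|$. Taking the maximum of these two bounds yields $\sup_{k\ge m}\|T(k,m)\phi\|<\infty$, i.e.\ $\phi\in\mathcal S(m)$.

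Both directions are elementary and I do not anticipate any serious obstacle; the argument is essentially the semigroup/cocycle bookkeeping. The only mild points requiring care are the correct interpretation of $[\,T(n,m)\,]^{-1}$ as a preimage rather than an operator inverse, and the remark that the finitely many indices between $m$ and $n$ contribute nothing new to boundedness, thanks to the uniform growth bound~\eqref{ev3}.
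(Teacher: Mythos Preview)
Your proposal is correct and follows essentially the same route as the paper's proof: both directions are handled via the cocycle identity~\eqref{ev2}, and for the reverse inclusion the finitely many indices between $m$ and $n$ are controlled separately (you invoke~\eqref{ev3} explicitly, while the paper simply observes that a finite maximum of finite quantities is finite). Your remark on reading $[\,T(n,m)\,]^{-1}$ as a set-theoretic preimage is also apt.
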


\begin{proof}[Proof of Claim~\ref{INV}]
Let $n$ and $m$ be as in the statement. If $\phi \in \mathcal S(m)$, then (see~\eqref{ev2})
\[
\sup_{k\geq n}\|T(k,n)T(n, m)\phi \|=\sup_{k\geq n}\|T(k,m)\phi \|\le \sup_{k\geq m}\|T(k,m)\phi \|<\infty. 
\]
This shows that $T(n,m)\phi \in \mathcal S(n)$ and hence $\phi \in [\,T(n,m)\,]^{-1}(\mathcal S(n))$.

Now suppose that $\phi \in [\,T(n,m)\,]^{-1}(\mathcal S(n))$. Then, $T(n,m)\phi \in \mathcal S(n)$, which implies that
\begin{equation*}
\sup_{k\geq n}\|T(k,m)\phi\|=\sup_{k\geq n}\|T(k,n)T(n,m)\phi\|<\infty.
\end{equation*}
Hence
\begin{equation*}
\sup_{k\geq m}\|T(k,m)\phi\|\leq \max_{m\leq k\leq n-1}\|T(k,m)\phi\|+\sup_{k\geq n}\|T(k,m)\phi\|<\infty.
\end{equation*}
Thus, $\phi \in \mathcal S(m)$.
\end{proof}

\begin{Claim}\label{ee}
For $n,m\in \mathbb Z^+_0$ with $n\ge m$, we have the algebraic sum decomposition 
\begin{equation}\label{SPL}
\mathcal B_r=T(n,m)\mathcal B_r+\mathcal S(n).
\end{equation}
\end{Claim}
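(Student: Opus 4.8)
The plan is to exploit the eventual compactness of the solution operator $T(n,m)$ together with the Perron type property established in Claim~\ref{LEM1}. Since the coefficients $A_j(n)$ are compact and uniformly bounded, one expects that for $n-m$ large enough (specifically $n-m > r$, so that the "history" $x_n$ is entirely built from values produced by the equation rather than inherited from the initial segment), the operator $T(n,m)$ is compact. This is the key structural input: a bounded operator whose range, after finitely many steps, lies in a compact-like setting, so that $T(n,m)\mathcal B_r$ behaves like a finite-codimensional-friendly object. The decomposition~\eqref{SPL} should follow by showing that the quotient $\mathcal B_r/\mathcal S(n)$ is spanned by (the images of) $T(n,m)\mathcal B_r$; equivalently, that every $\phi\in\mathcal B_r$ can be written as $T(n,m)\psi + \eta$ with $\eta\in\mathcal S(n)$.

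Concretely, **first I would** fix $n\geq m$ and take an arbitrary $\phi\in\mathcal B_r$. I want to produce $\psi\in\mathcal B_r$ and $\eta\in\mathcal S(n)$ with $\phi=T(n,m)\psi+\eta$. The idea is to run the equation forward from $n$ with initial segment $\phi$, obtaining a solution $u$ on $\mathbb Z^+_n$ with $u_n=\phi$; this $u$ need not be bounded. Then use Claim~\ref{LEM1}: define a bounded forcing term $z$ supported on $\{0,1,\dots,n-1\}$ (or use the equation's structure on $[m,n]$) so that the bounded solution $x$ furnished by the Perron property agrees with a solution through $m$ on $[m,n]$, forcing $x_n\in\mathcal S(n)$ by boundedness, while $x_n - \phi$ is controlled. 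More cleanly: let $x$ be the bounded solution of $x(k+1)=L_k(x_k)+z(k)$ where $z(k)=0$ for $k\geq n$ and $z$ is chosen on $[0,n-1]$ so that $x_n=\phi$ — this is a finite linear algebra problem (a finite system), solvable because we only need to hit one target segment $\phi$ in $\mathcal B_r$ using the $n$ "free" forcing values; then $x_n=\phi$, and since $x$ is bounded on $\mathbb Z^+_n$ we get $\phi=x_n\in\mathcal S(n)$... but that would give $\mathcal B_r=\mathcal S(n)$, which is too strong. So the correct route must instead force $x$ to be a genuine solution on $[m,n]$ (i.e. $z\equiv 0$ there) and only allow $x_n$ to differ from $\phi$ by something in the range $T(n,m)\mathcal B_r$.

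**The cleaner argument** I would actually carry out: given $\phi\in\mathcal B_r$, extend it to the solution $u$ of~\eqref{nef} on $\mathbb Z^+_n$ with $u_n=\phi$, and set $v(k)=u(k)\chi_{[n,\infty)}(k) - \text{(something)}$; more directly, apply Claim~\ref{LEM1} to the bounded function $z$ defined by $z(k)=0$ for $k\neq n-1$ (or on a window of length $r+1$ ending at $n-1$) and $z$ equal to the value needed to "inject" $\phi$. One then checks that the resulting bounded $x$ satisfies: its segment $x_n$ differs from $\phi$ by a solution segment originating at time $m$, i.e. $\phi - x_n \in T(n,m)\mathcal B_r$ — here one uses that the difference of two functions satisfying~\eqref{nef} on $[m,n)$ with prescribed mismatch at the single forcing step $n-1$ is, after restricting to $[m-r, n-1]$ and propagating, an honest $T(n,m)$-image. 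Since $x$ is bounded on $\mathbb Z^+_n$, $x_n\in\mathcal S(n)$, giving $\phi\in T(n,m)\mathcal B_r+\mathcal S(n)$. The reverse inclusion is trivial. **The main obstacle** I anticipate is the bookkeeping of initial segments over the delay window $[m-r,m]$: one must ensure the "injection" of $\phi$ at time $n$ via a finitely-supported forcing is compatible with having a true $m$-originated solution absorb the discrepancy, which requires $n\geq m$ but not necessarily $n > m+r$; handling the short-gap case $m\leq n\leq m+r$ may need a separate, direct argument (possibly using~\eqref{ev2} to reduce to a larger gap, as in Claim~\ref{INV}'s second half). I would reduce to large $n-m$ via the composition property~\eqref{ev2} and the already-proved Claim~\ref{INV}, handling $\mathcal S(n)$-membership through boundedness and the growth bound~\eqref{ev3}.
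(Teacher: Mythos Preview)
Your core idea—apply the Perron property (Claim~\ref{LEM1}) to a carefully chosen bounded forcing $z$—is exactly the paper's method, so the approach is essentially the same. Two points, however, need correcting.

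First, compactness of the $A_j(n)$ is irrelevant here; eventual compactness of $T(n,m)$ enters only later (Claims~\ref{compcrit} and~\ref{covariance}) to show that $\mathcal S(0)$ is closed with finite codimension. Claim~\ref{ee} uses nothing beyond Claim~\ref{LEM1} and the evolution property~\eqref{ev2}.

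Second, the paper assigns the two roles oppositely to your sketch, and its construction is cleaner. After reducing to $m=0$ via $T(n,0)=T(n,m)T(m,0)$, one sets $v_n=\phi$ with $v(k)=0$ for $k\geq n+1$, and takes $z(k)=0$ for $0\le k\le n-1$, $z(k)=v(k+1)-L_k(v_k)$ for $k\ge n$. The bounded $x$ furnished by Claim~\ref{LEM1} is then a genuine homogeneous solution on $[0,n]$, so $x_n=T(n,0)x_0\in T(n,0)\mathcal B_r$; and $x-v$ is a bounded homogeneous solution on $\mathbb Z^+_n$, so $x_n-\phi\in\mathcal S(n)$. Your swapped version (forcing $x_n\in\mathcal S(n)$ and $\phi-x_n\in T(n,m)\mathcal B_r$) can also be made to work, but the truncation must be at $n-r$, not at $n$: taking $v=u\cdot\chi_{[n,\infty)}$ destroys $v_n=\phi$ at every $\theta<0$. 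With $v(k)=u(k)$ for $k\ge n-r$ and $v(k)=0$ otherwise (so $v_n=\phi$), the forcing $z(k)=v(k+1)-L_k(v_k)$ is supported on the window $[n-r-1,n-1]$ you mention parenthetically; then $x$ is homogeneous and bounded on $\mathbb Z^+_n$ (giving $x_n\in\mathcal S(n)$), while $v-x$ is homogeneous on all of $\mathbb Z^+_0$, giving $\phi-x_n=(v-x)_n=T(n,0)(v-x)_0\in T(n,0)\mathcal B_r$.
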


\begin{proof}[Proof of Claim~\ref{ee}]
It is sufficient to prove the claim for $m=0$. Indeed, assuming that the desired conclusion holds for $m=0$, we now fix an arbitrary $m\geq 1$. Then, for every $n\geq m$ and $\phi \in \mathcal B_r$, there exist $\phi_1\in \mathcal B_r$ and $\phi_2\in \mathcal S(n)$ such that $\phi=T(n,0)\phi_1+\phi_2$. Hence,
\[
\phi=T(n,0)\phi_1+\phi_2=T(n,m)T(m,0)\phi_1+\phi_2\in T(n,m)\mathcal B_r+\mathcal S(n).
\]
Thus,~\eqref{SPL} holds.
Therefore, from now on, we suppose that $m=0$. Evidently, for every $\phi\in \mathcal B_r$, 
\[
\phi=T(0,0)\phi+0\in T(0,0)\mathcal B_r+\mathcal S(0).
\]
Thus,~\eqref{SPL} holds for $n=m=0$. Now suppose that $n>0$ and let
 $\phi \in \mathcal B_r$ be arbitrary.  
 Define $v\colon \mathbb Z^+_{n-r}\to X$ by 
\[
v(k)=\begin{cases}
    \phi(k-n) &\quad\text{for $n-r\leq k\leq n$},\\
    0 & \quad \text{for $k\geq n+1$}
\end{cases}
\]
so that $v_n=\phi$
and $z\colon\mathbb Z^+_0\to X$ by
\[
z(k)=\begin{cases}
0&\quad\text{for $0\leq k\leq n-1$},\\
v(k+1)-L_k(v_k) &\quad\text{for $k\geq n$}.
\end{cases}
\]
Clearly, $v$ is bounded on~$\mathbb Z^+_{n-r}$. From this and~\eqref{funibdd}, we find that $\sup_{k\geq 0} |z(k)|<\infty$.
By Claim~\ref{LEM1}, there exists a function $x\colon \mathbb Z^+_{-r} \to X$ such that $\sup_{k\geq-r}|x(k)|<\infty$ and~\eqref{adm} holds. Moreover, it follows from the definition of~$z$ that 
\[
v(k+1)=L_k(v_k)+z(k), \qquad k\geq n.
\]
From this and~\eqref{adm}, we conclude that $x-v$ is a solution of Eq.~\eqref{LDE} on $\mathbb Z^+_n$ and thus
\begin{equation*}\label{g}
x_k-v_k=T(k,n)(x_n-v_n)=T(k,n)(x_n-\phi), \qquad k\geq n.
\end{equation*}
Since both~$x$ and~$v$ are bounded on~$\mathbb Z^+_{n-r}$,  this implies that $x_n-\phi \in \mathcal S(n)$. On the other hand, since $z(k)=0$ for $0\leq k<n$, we have that $x_n=T(n, 0)x_0$. This implies that
\[
\phi=x_n+(\phi-x_n)=T(n,0)x_0+(\phi-x_n)\in T(n,0)\mathcal B_r+ \mathcal S(n).
\]
Since $\phi \in \mathcal B_r$ was arbitrary, we conclude that~\eqref{SPL} holds for $m=0$.
\end{proof}

\begin{Claim}\label{subcomplete}
For each $m\in \mathbb Z^+_0$, $\mathcal S(m)$ is the image of a  Banach space under the action of a bounded linear operator.
\end{Claim}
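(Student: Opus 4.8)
The plan is to show that $\mathcal S(m)$, equipped with a suitable stronger norm, is itself a Banach space, and then to take the inclusion $\mathcal S(m)\hookrightarrow\mathcal B_r$ as the required bounded linear operator. Define, for $\phi\in\mathcal S(m)$,
\[
\|\phi\|_{\mathcal S(m)}:=\sup_{n\ge m}\|T(n,m)\phi\|,
\]
which is finite precisely because $\phi\in\mathcal S(m)$. It is plainly a seminorm, and since $T(m,m)=I$ by~\eqref{ev1} we have $\|\phi\|_{\mathcal S(m)}\ge\|\phi\|$, so it is in fact a norm; moreover the inclusion $\iota\colon(\mathcal S(m),\|\cdot\|_{\mathcal S(m)})\to(\mathcal B_r,\|\cdot\|)$ satisfies $\|\iota\|\le 1$ and has image precisely $\mathcal S(m)$. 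Thus the whole content of the claim reduces to proving that $(\mathcal S(m),\|\cdot\|_{\mathcal S(m)})$ is complete.

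For this I would take a $\|\cdot\|_{\mathcal S(m)}$-Cauchy sequence $(\phi_k)_{k\ge 1}$. For each fixed $n\ge m$ one has
\[
\|T(n,m)\phi_k-T(n,m)\phi_l\|\le\|\phi_k-\phi_l\|_{\mathcal S(m)},
\]
so $(T(n,m)\phi_k)_k$ is Cauchy in the Banach space $\mathcal B_r$, with a Cauchy estimate uniform in $n$; let $\psi_n\in\mathcal B_r$ be its limit. Taking $n=m$ gives $\phi_k\to\psi_m=:\phi$ in $\mathcal B_r$, and since each $T(n,m)$ is a bounded operator on $\mathcal B_r$ by~\eqref{ev3}, letting $k\to\infty$ yields $\psi_n=T(n,m)\phi$ for every $n\ge m$. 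Boundedness of the Cauchy sequence, say $\|\phi_k\|_{\mathcal S(m)}\le C$ for all $k$, then gives $\|T(n,m)\phi\|=\lim_k\|T(n,m)\phi_k\|\le C$ uniformly in $n\ge m$, so $\phi\in\mathcal S(m)$. Finally, fixing $\epsilon>0$ and $K$ with $\|T(n,m)\phi_k-T(n,m)\phi_l\|\le\epsilon$ for all $n\ge m$ and all $k,l\ge K$, and letting $l\to\infty$, one gets $\|\phi_k-\phi\|_{\mathcal S(m)}\le\epsilon$ for $k\ge K$. Hence $\phi_k\to\phi$ in $\mathcal S(m)$, completeness follows, and the claim holds with $B=(\mathcal S(m),\|\cdot\|_{\mathcal S(m)})$ and the operator $\iota$.

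The only delicate point — the ``hard part'', such as it is — is the exchange of the two limiting processes, in $k$ and in $n$, inside the completeness argument: it is essential that being Cauchy in the graph-type norm $\|\cdot\|_{\mathcal S(m)}$ controls $\|T(n,m)(\phi_k-\phi_l)\|$ uniformly in $n$, which is exactly what forces the pointwise limits $\psi_n=T(n,m)\phi$ to have a uniform bound, so that the orbit of $\phi$ is genuinely bounded and $\phi$ lands back in $\mathcal S(m)$. Everything else is routine bookkeeping. Equivalently, one could realize $\mathcal S(m)$ as the preimage under the map $\phi\mapsto(T(n,m)\phi)_{n\ge m}$ of $\ell^\infty(\mathbb Z^+_m,\mathcal B_r)$ and invoke a standard completeness lemma, but the direct verification above is the shortest route.
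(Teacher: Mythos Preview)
Your proof is correct and is essentially the same argument as the paper's, repackaged: the paper takes the Banach space $\mathcal X'$ of bounded solutions $x\colon\mathbb Z^+_{m-r}\to X$ of~\eqref{LDE} with the sup norm and uses the evaluation $\Phi(x)=x_m$, whereas you put the norm $\|\phi\|_{\mathcal S(m)}=\sup_{n\ge m}\|T(n,m)\phi\|$ directly on $\mathcal S(m)$; the map $x\mapsto x_m$ is an isometric bijection from $\mathcal X'$ onto $(\mathcal S(m),\|\cdot\|_{\mathcal S(m)})$, so the two constructions coincide and the completeness check is the same closedness argument in different coordinates.
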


\begin{proof}[Proof of Claim~\ref{subcomplete}]
Fix $m\in \mathbb Z^+_0$ and let $\mathcal X$ denote the Banach space of all bounded functions $x\colon \mathbb Z^+_{m-r} \to X$ equipped with the supremum norm,
\[
\|x\|_{\mathcal X}:=\sup_{k\geq m-r}|x(k)|<\infty,\qquad x\in\mathcal X.
\]
Let $\mathcal X'$ denote the set of all $x\in \mathcal X$ which are solutions of~\eqref{LDE} on $\mathbb Z^+_0$. We will show that $\mathcal X'$ is a closed subspace of $\mathcal X$.  To this end, let $(x^j)_{j\in \mathbb Z^+_0}$ be a sequence in $\mathcal X'$ such that $x^j \to y$ in $\mathcal X$ as $j\to\infty$ for some $y\in\mathcal X$. Then, $x^j(k)\to y(k)$ as $j\to\infty$ for every $k\geq m-r$. Moreover, for each $n\geq m$, we have that
$x^j_n\to y_n$ in~$\mathcal B_r$ as $j\to\infty$. From this, by letting  $j\to\infty$ in the equation
\begin{equation*}
 x^j(n+1)=L_n(x^j_n),\qquad n\geq m,   
\end{equation*}
and using the continuity of the coefficients~$L_n$, we conclude that
\begin{equation*}
y(n+1)=L_n(y_n),\qquad n\geq m.  
\end{equation*}
Thus, $y$ is a solution of Eq.~\eqref{LDE} on $\mathbb Z^+_m$ and hence $y\in\mathcal X'$.
This shows that $\mathcal X'$ is a closed subspace of $\mathcal X$ and hence it is a Banach space. Now define $\Phi \colon \mathcal X' \to \mathcal B_r$ by $\Phi(x)=x_m$ for $x\in\mathcal X'$. Clearly, $\Phi$ is a bounded linear operator with 
$\| \Phi \| \le 1$ and $\Phi(\mathcal X')=\mathcal S(m)$.
\end{proof}

\begin{Claim}\label{compcrit} For $n, m\in \mathbb Z^+_0$ with $n\geq m+r+1$,
the solution operator
$T(n,m)\colon \mathcal B_r\to \mathcal B_r$ is compact. 
\end{Claim}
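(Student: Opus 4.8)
\emph{Proof strategy for Claim~\ref{compcrit}.} The plan is to exploit the recursive structure of Eq.~\eqref{nef} together with the compactness of the coefficient operators. For $k\in\mathbb Z^+_{m-r}$, let $E_k\colon\mathcal B_r\to X$ be the ``evaluation of the solution'' map $E_k\phi=x(k)$, where $x$ denotes the solution of Eq.~\eqref{nef} with initial value $x_m=\phi$. Each $E_k$ is a bounded linear operator: for $m-r\le k\le m$ we have $E_k\phi=\phi(k-m)$, so $\|E_k\|\le1$, while for $k\ge m$ the operator $E_k$ is the coordinate at $\theta=0$ of $T(k,m)$ and hence $\|E_k\|\le e^{\omega(k-m)}$ by~\eqref{ev3}. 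Also note that $T(n,m)\phi$ is exactly the function $\theta\mapsto E_{n+\theta}\phi$ on $[-r,0]\cap\mathbb Z$, so $T(n,m)$ has the operators $E_n,E_{n-1},\dots,E_{n-r}$ as its $r+1$ coordinates under the identification $\mathcal B_r\cong X^{r+1}$ (with the maximum norm).

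The key observation is that $E_k$ is \emph{compact} for every $k\ge m+1$. Indeed, reading off Eq.~\eqref{nef} at the index $k-1\ge m$ yields
\begin{equation*}
E_k=\sum_{j=0}^r A_j(k-1)\,E_{k-1-j},
\end{equation*}
where every index $k-1-j$, $0\le j\le r$, satisfies $m-r\le k-1-j\le k-1$, so each $E_{k-1-j}$ is a well-defined bounded operator. Since $A_j(k-1)\in\mathcal L(X)$ is compact by hypothesis, each summand $A_j(k-1)E_{k-1-j}$ is compact (a compact operator composed with a bounded operator is compact), and a finite sum of compact operators is compact; hence $E_k$ is compact.

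It then remains to assemble these facts. If $n\ge m+r+1$, then $n+\theta\ge n-r\ge m+1$ for every $\theta\in[-r,0]\cap\mathbb Z$, so all of $E_n,E_{n-1},\dots,E_{n-r}$ are compact. Using that a subset of the finite product $X^{r+1}$ is relatively compact if and only if each of its coordinate projections is relatively compact in $X$, we conclude that $T(n,m)$, having compact coordinate operators, maps bounded subsets of $\mathcal B_r$ to relatively compact sets, i.e., $T(n,m)$ is compact.

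I do not expect a serious obstacle here; the argument is essentially bookkeeping. The two points that need a little care are the index range $m-r\le k-1-j\le k-1$ in the recursion (this is precisely what forces the threshold $n\ge m+r+1$: the evaluation coordinates $E_{m-r},\dots,E_m$ need not be compact when $X$ is infinite dimensional), and the invocation of the standard facts that a composition of a compact and a bounded operator is compact, that finite sums of compact operators are compact, and that compactness of an operator into a finite direct sum of Banach spaces is equivalent to compactness of each of its components.
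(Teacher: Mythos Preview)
Your argument is correct and follows essentially the same approach as the paper: both identify $\mathcal B_r$ with $X^{r+1}$ and show that each coordinate of $T(n,m)$ factors through the compact coefficient operators $A_j(\cdot)$, hence is compact. The only cosmetic difference is that the paper first proves compactness of $T(m+r+1,m)$ and then composes with the bounded operator $T(n,m+r+1)$ for larger~$n$, whereas you handle all $n\ge m+r+1$ in one stroke via the evaluation operators $E_k$.
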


\begin{proof}[Proof of Claim~\ref{compcrit}] Suppose that $m\in\mathbb Z^+_0$ and $\phi\in \mathcal B_r$. Let $x$ denote the unique solution of~\eqref{LDE} with initial value $x_m=\phi$. From~\eqref{LDE} and~\eqref{funibdd}, we obtain for $n\geq m$,
\begin{equation*}
    |x(n+1)|\leq\|L_n\|\|x_n\|\leq M\|x_n\|,
\end{equation*}
and hence
\begin{equation*}
    \|x_{n+1}\|\leq|x(n+1)|+\|x_n\|\leq(1+M)\|x_n\|.
\end{equation*}
Since $\|x_m\|=\|\phi\|$, this implies by induction on~$n$ that
\begin{equation}\label{expest}
 \|x_n\|\leq(1+M)^{n-m}\|\phi\|\qquad\text{for $n\geq m$}.   
\end{equation}
Clearly, $\iota(\phi)=(\phi(-r),\phi(-r+1),\dots,\phi(0))$ is an isometric isomorphism between the phase space~$\mathcal B_r$ and the $(r+1)$-fold product space~$X^{r+1}$ endowed with the maximum norm, $\|x\|=\max_{1\leq j\leq r+1}|x_j|$ for $x=(x_1,x_2,\dots,x_{r+1})\in X^{r+1}$. With this identification, we have that
\begin{equation*}
T(m+r+1,m)\phi=x_{m+r+1}=(x(m+1),x(m+2),\dots,x(m+r+1)),
\end{equation*}
which, together with Eq.~\eqref{LDE}, implies that
\begin{equation}\label{soloprep}
T(m+r+1,m)\phi=\bigl(L_m(x_m),L_{m+1}(x_{m+1}),\dots,L_{m+r}(x_{m+r})\bigr).
\end{equation}
Let $S$ be an arbitrary bounded subset of~$\mathcal B_r$. Then, there exists $\rho>0$ such that $\|\phi\|\leq\rho$ for all $\phi\in S$. From this and~\eqref{expest}, we obtain that 
\begin{equation*}
\|x_{m+j}\|\leq(1+M)^{r}\rho\qquad\text{whenever $\phi\in S$ and $0\leq j\leq r$}.
\end{equation*}
Therefore, if $\phi\in S$, then the segments $x_m, x_{m+1},\dots,x_{m+r}$ of the corresponding solution~$x$ of~\eqref{LDE} with initial value $x_m=\phi$ belong to the closed ball of radius $(1+M)^{r}\rho$ around zero in~$\mathcal B_r$, which will be denoted by~$D$. From this and~\eqref{soloprep}, we conclude that
\begin{equation}\label{solopincl}
T(m+r+1,m)(S)\subset C:=\overline{L_m(D)}\times\overline{L_{m+1}(D)}\times\dots\times\overline{L_{m+r}(D)}.
\end{equation}
Since $L_m,L_{m+1},\dots,L_{m+r}$ are compact operators and~$D$ is a bounded set, the closures of the image sets
$L_m(D), L_{m+1}(D),\dots,L_{m+r}(D)$ are compact subsets of~$X$. Therefore, $C$ is a product of $r+1$ compact subsets of~$X$ which is a compact subset of the product space~$X^{r+1}$. In view of~\eqref{solopincl}, $T(m+r+1,m)(S)$
is a subset of the compact set~$C\subset X^{r+1}$ and hence it is relatively compact in~$X^{r+1}$. Since $S$ was an arbitrary bounded subset of~$\mathcal B_r$, this proves that $T(m+r+1,m)\colon \mathcal B_r\to \mathcal B_r$ is a compact operator. Finally, if $n>m+r+1$, then
\begin{equation*}
T(n,m)=T(n,m+r+1)T(m+r+1,m)   
\end{equation*}
 is a product of the bounded linear operator $T(n,m+r+1)$ and the compact operator $T(m+r+1,m)$ and hence it is compact (see, e.g., \cite{Taylor}).
\end{proof}

\begin{Claim}\label{covariance}
The stable subspace~$\mathcal S(0)$ of Eq.~\eqref{LDE} is closed and has finite codimension in~$\mathcal B_r$.
\end{Claim}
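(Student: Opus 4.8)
The plan is to deduce the statement from the closedness of the stable subspace at time $N:=r+1$ and then transport this back to time $0$. Since $N\ge 0+r+1$, Claim~\ref{compcrit} gives that $T(N,0)\colon\mathcal B_r\to\mathcal B_r$ is compact, while Claim~\ref{ee} (applied with $m=0$, $n=N$) gives the splitting $\mathcal B_r=T(N,0)\mathcal B_r+\mathcal S(N)$. First I would show that the closure $\overline{\mathcal S(N)}$ has finite codimension in $\mathcal B_r$: letting $q\colon\mathcal B_r\to\mathcal B_r/\overline{\mathcal S(N)}$ be the quotient map onto the quotient Banach space, the composition $q\circ T(N,0)$ is compact, and because $q(\mathcal S(N))=\{0\}$ the splitting yields $q\circ T(N,0)(\mathcal B_r)=q(\mathcal B_r)=\mathcal B_r/\overline{\mathcal S(N)}$, so $q\circ T(N,0)$ is onto. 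A surjective compact operator between Banach spaces has a finite dimensional target: by the open mapping theorem the image of the unit ball contains a ball, while being relatively compact, so the target is locally compact and hence finite dimensional. Thus $\dim\bigl(\mathcal B_r/\overline{\mathcal S(N)}\bigr)<\infty$.

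Granting for the moment that in fact $\mathcal S(N)=\overline{\mathcal S(N)}$, the claim follows easily. Indeed $\mathcal S(N)$ is then closed of finite codimension, and by Claim~\ref{INV} we have $\mathcal S(0)=[\,T(N,0)\,]^{-1}(\mathcal S(N))$, which is closed as the preimage of a closed set under the bounded operator $T(N,0)$. Moreover $T(N,0)$ induces a linear map $\mathcal B_r/\mathcal S(0)\to\mathcal B_r/\mathcal S(N)$, $[\phi]\mapsto[T(N,0)\phi]$, which is injective by Claim~\ref{INV} and surjective by the splitting in Claim~\ref{ee}; hence $\mathcal S(0)$ has the same (finite) codimension as $\mathcal S(N)$, and it is closed.

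It therefore remains to prove $\mathcal S(N)=\overline{\mathcal S(N)}$, and this is the step I expect to be the main obstacle, since the argument above controls only the closure of $\mathcal S(N)$. By Claim~\ref{subcomplete}, $\mathcal S(N)$ is the range of a bounded linear operator $\Phi_N$ from a Banach space into $\mathcal B_r$, and $\Phi_N$ is injective because a solution of Eq.~\eqref{LDE} on $\mathbb Z^+_N$ is uniquely determined by its segment $x_N\in\mathcal B_r$. By the standard fact that a finite-codimensional range of a bounded operator between Banach spaces is closed (extend the operator by a finite dimensional complement to a surjection, apply the open mapping theorem, and note that the original operator is then a topological embedding onto its range), it suffices to exclude the possibility that $\mathcal S(N)$ be a dense proper subspace of $\overline{\mathcal S(N)}$. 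Equivalently, one must show that $\Phi_N$ is bounded below, i.e.\ that there is a constant $C$ with $\sup_{n\ge N}\|x_n\|\le C\|x_N\|$ for every bounded solution $x$ of Eq.~\eqref{LDE} on $\mathbb Z^+_N$; then $\mathcal S(N)=\operatorname{ran}\Phi_N$ is closed and, being dense in $\overline{\mathcal S(N)}$, equals it.

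I expect the proof of this last uniform bound to use the shadowing hypothesis once more — through the Perron-type property of Claim~\ref{LEM1} — together with the eventual compactness of Claim~\ref{compcrit}: if the bound failed one could choose bounded solutions $x^{(k)}$ with $\|x^{(k)}_N\|\to0$ but $\|x^{(k)}_{n_k}\|$ bounded away from $0$ for some times $n_k\to\infty$, and the compactness of the $(r+1)$-step solution operators $T(m+r+1,m)$ — applied \emph{uniformly} in the initial time $m$, which is precisely where the uniform bound~\eqref{unibdd} on the coefficients is needed — should allow one to extract a limiting nonzero bounded solution incompatible with Claim~\ref{LEM1}. Alternatively, the closedness (and complementedness) of $\mathcal S(N)$ can be obtained from Sch\"affer's structural theorem on regular covariant subspaces~\cite{Schaffer}, whose hypotheses are furnished by Claims~\ref{ee} and~\ref{subcomplete}.
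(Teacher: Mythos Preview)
Your reduction is sound up to the point you flag as ``the main obstacle.'' The quotient argument showing that $\overline{\mathcal S(N)}$ has finite codimension is correct, and so is the transport from time $N$ back to time $0$ via Claim~\ref{INV} once $\mathcal S(N)$ is known to be closed. The difficulty is exactly where you locate it: proving $\mathcal S(N)=\overline{\mathcal S(N)}$.

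Your approach~(a) does not work as written. If $\|x^{(k)}_N\|\to 0$, then by the forward bound~\eqref{ev3} one has $\|x^{(k)}_n\|\le e^{\omega(n-N)}\|x^{(k)}_N\|\to 0$ for every \emph{fixed} $n\ge N$; any diagonal or compactness extraction therefore yields the zero sequence pointwise, not a ``limiting nonzero bounded solution.'' The times $n_k$ at which $\|x^{(k)}_{n_k}\|$ stays large necessarily escape to infinity, and in a genuinely nonautonomous setting there is no shift that turns this into a solution of the same equation. Moreover, even if you could manufacture a nonzero bounded homogeneous solution, this would not contradict Claim~\ref{LEM1}: the Perron property asserts only existence, not uniqueness, of bounded solutions of the inhomogeneous equation, so nonzero bounded homogeneous solutions are perfectly compatible with it. (The uniform bound you are after is exactly the content of Claim~\ref{C}, but that claim is proved \emph{after} Claim~\ref{covariance} and relies on the splitting~\eqref{SPLIT}, so invoking it here would be circular.)

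Your alternative~(b) is the paper's route, and it is the one that closes the gap. The paper applies Sch\"affer's lemma (Lemma~\ref{schaef}) directly: Claims~\ref{INV}, \ref{ee} and~\ref{subcomplete} show that $(\mathcal S(n))_{n\ge 0}$ is a subcomplete, algebraically regular covariant sequence for the one-step evolution, and Claim~\ref{compcrit} supplies a compact transition operator. Sch\"affer's lemma then gives closedness and constant finite codimension of all $\mathcal S(n)$ at once, in particular of $\mathcal S(0)$. With this in hand, your quotient argument and the transport from $N$ to $0$ become redundant --- Sch\"affer delivers the conclusion for $n=0$ directly.
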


Before giving a proof of Claim~\ref{covariance}, let us recall the following notions~\cite{Schaffer}. Let~$B$ be a Banach space.
A subspace $S$ of~$B$ is called \emph{subcomplete in~$B$} if there exist a Banach space~$Z$ and a bounded linear operator $\Phi\colon Z\to B$ such that $\Phi(Z)=S$

Let $\mathcal A\colon\mathbb Z^+_0\to\mathcal L(B)$ be an operator-valued map. For $n,m\in \mathbb Z^+_0$ with $n\geq m$, define the corresponding \emph{transition operator} $U(n,m)\colon B\to B$  by
\[
U(n,m)=\mathcal A(n-1)\mathcal A(n-2)\cdots \mathcal A(m)\qquad\text{for $n>m$}
\]
and $U(n,n)=I$ for $n\in\mathbb Z^+_0$, where $I$ denotes the identity operator on $B$. A sequence $Y=(Y(n))_{n\in\mathbb Z^+_0}$ of subspaces in~$B$ is called a \emph{covariant sequence for~$\mathcal A$} if 
\[
[\,\mathcal A(n)\,]^{-1}(Y(n+1))=Y(n) \qquad\text{for all $n\in \mathbb Z^+_0$}.
\]
A covariant sequence~$Y=(Y(n))_{n\in\mathbb Z^+_0}$ for~$\mathcal A$ is called \emph{algebraically regular} if
\[
U(n,0)B+Y(n)=B\qquad\text{for each $n\in \mathbb Z^+_0$}.
\]
Finally, a covariant sequence~$Y=(Y(n))_{n\in\mathbb Z^+_0}$ for~$\mathcal A$ is called \emph{subcomplete} if the subspace $Y(n)$ is subcomplete in~$B$ for all $n\in\mathbb Z^+_0$.

The proof of Claim~\ref{covariance} will be based on the following result due to Sch\"affer~\cite{Schaffer}.

\begin{lemma}\label{schaef}
{\rm (\cite[Lemma~3.4]{Schaffer})}
Let $B$ be a Banach space and $\mathcal A\colon\mathbb Z^+_0\to\mathcal L(B)$. Suppose that 
$Y=(Y(n))_{n\in\mathbb Z^+}$ is a subcomplete algebraically regular covariant sequence for~$\mathcal A$. If the transition operator $U(n,m)\colon B\to B$ is compact for some $n,m\in\mathbb Z^+_0$ with $n>m$, then the subspaces $Y(n)$, $n\in\mathbb Z^+_0$, are closed and have constant finite codimension in~$B$.
\end{lemma}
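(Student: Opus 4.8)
The plan is to reduce the lemma to a single functional-analytic fact about subcomplete subspaces and compact operators, and then to propagate closedness and codimension along the covariant sequence. As a preliminary step I would extract the purely algebraic content of the hypotheses: iterating the covariance relation $[\mathcal A(k)]^{-1}(Y(k+1))=Y(k)$ gives $[U(N,n)]^{-1}(Y(N))=Y(n)$ for all $N\geq n$, so the induced linear map $B/Y(n)\to B/Y(N)$, $b+Y(n)\mapsto U(N,n)b+Y(N)$, is injective. Since $U(N,0)=U(N,n)U(n,0)$ we have $U(N,0)B\subseteq U(N,n)B$, so algebraic regularity $U(N,0)B+Y(N)=B$ forces $U(N,n)B+Y(N)=B$ for every $N\geq n$; equivalently the same map is onto. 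Hence it is a (purely algebraic) isomorphism, and in particular $\operatorname{codim}Y(n)$ is independent of $n$.

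Next I would isolate the following statement: \emph{if $S\subseteq B$ is subcomplete and there is a compact operator $T\in\mathcal L(B)$ with $TB+S=B$, then $S$ is closed and of finite codimension in $B$.} To prove it, write $S=\Phi(Z)$ with $Z$ a Banach space and $\Phi\colon Z\to B$ bounded, and consider $\Xi\colon B\oplus Z\to B$ given by $\Xi(b,z)=Tb+\Phi z$. It is bounded and surjective, hence a semi-Fredholm operator with closed range and finite-dimensional (indeed trivial) cokernel. Write $\Xi=\Xi_0+K$, where $\Xi_0(b,z)=\Phi z$ and $K(b,z)=Tb$; the operator $K$ is compact, so by the classical perturbation theory of semi-Fredholm operators $\Xi_0$ again has closed range and finite-dimensional cokernel. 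Since the range of $\Xi_0$ is exactly $S$, it follows that $S$ is closed and $\operatorname{codim}_B S=\dim(B/S)<\infty$.

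Finally I would combine the two ingredients. Let $n_*>m_*$ be a pair with $U(n_*,m_*)$ compact. For every $N\geq n_*$ the operator $U(N,m_*)=U(N,n_*)\,U(n_*,m_*)$ is a composition of a bounded operator with a compact one, hence compact, and $U(N,m_*)B+Y(N)=B$ by the preliminary step; applying the isolated statement with $S=Y(N)$ and $T=U(N,m_*)$ shows that $Y(N)$ is closed of finite codimension for all $N\geq n_*$. For an arbitrary index $n$, choose $N\geq\max\{n,n_*\}$; then $Y(n)=[U(N,n)]^{-1}(Y(N))$ is the preimage of a closed set under a continuous operator, hence closed, and its codimension equals that of $Y(N)$, which is finite. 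Since $\operatorname{codim}Y(n)$ does not depend on $n$, all the subspaces $Y(n)$ are closed and share one and the same finite codimension.

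The heart of the matter — and the step I expect to require the most care — is the isolated statement: recognizing that $TB+S=B$ with $T$ compact promotes the bounded surjection $\Xi$ to a semi-Fredholm operator whose compact perturbation $\Xi_0$ still has range precisely $S$. This is where both the subcompleteness of the $Y(n)$ (to obtain the Banach space $Z$ surjecting onto $S$) and the compactness of a single transition operator are genuinely used; covariance and algebraic regularity serve only to manufacture, for each large index, a compact $T$ with $TB+Y(N)=B$, and to transfer closedness and codimension between indices. One should also double-check the degenerate cases (e.g.\ $N=n_*$, and that $N\geq n$ is maintained throughout) and confirm that the relevant perturbation-stable class is exactly ``closed range together with finite-dimensional cokernel.''
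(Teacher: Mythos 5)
The paper does not prove this lemma at all: it is quoted verbatim as \cite[Lemma~3.4]{Schaffer} and used as a black box in the proof of Claim~7, so there is no in-paper argument to compare yours against. Judged on its own, your proof is correct and complete. The algebraic preliminaries are right: iterating covariance gives $[U(N,n)]^{-1}(Y(N))=Y(n)$, whence the induced map $B/Y(n)\to B/Y(N)$ is injective, and $U(N,0)B\subseteq U(N,n)B$ turns algebraic regularity into surjectivity, so all the (algebraic) codimensions coincide. Your isolated statement is the real content, and the semi-Fredholm route works: $\Xi(b,z)=Tb+\Phi z$ is bounded and onto, hence lies in the class $\Phi_-$ (closed range, finite-dimensional cokernel); $K(b,z)=Tb$ is compact because $T$ is; and stability of $\Phi_-$ under compact perturbation (e.g.\ Kato, Thm.~IV.5.26, or by dualizing to $\Phi_+$) gives that $\Xi_0=\Xi-K$ has closed range of finite codimension, and that range is exactly $S=\Phi(Z)$. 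The assembly step is also sound, including the degenerate case $N=n_*$ and the observation that $Y(n)=[U(N,n)]^{-1}(Y(N))$ is closed as a continuous preimage of a closed set. The only point I would make explicit in a written version is the precise perturbation-stable class being used: one should either take $\Phi_-$ to mean ``closed range and finite-dimensional cokernel'' from the outset, or note the standard fact that a bounded operator whose range has finite algebraic codimension automatically has closed range, so the two formulations agree.
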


Now we can give a proof of Claim~\ref{covariance}.
\begin{proof}[Proof of Claim~\ref{covariance}]
Claims~\ref{INV}, \ref{ee} and~\ref{subcomplete} guarantee that the stable subspaces $Y(n):=\mathcal S(n)\subset \mathcal B_r$ of Eq.~\eqref{LDE} 
form a subcomplete algebraically regular covariant sequence for $\mathcal A\colon\mathbb Z^+\to\mathcal L(\mathcal B_r)$ defined by
\[
\mathcal A(n):=T(n+r+1, n), \qquad n\in \mathbb Z^+_0.
\]
According to Claim~\ref{compcrit}, the associated transition operator $U(n+1,n)=\mathcal A(n)$ is compact. By the application of Lemma~\ref{schaef}, we conclude that $Y(0)=\mathcal S(0)$ is closed and has finite codimension in~$\mathcal B_r$.
\end{proof}

By Claim~\ref{covariance}, the stable subspace~$\mathcal S(0)$ is closed and has finite codimension in~$\mathcal B_r$. This implies that~$\mathcal S(0)$ is complemented in~$\mathcal B_r$ (see, e.g., \cite[Lemma~4.21, p.~106]{Rud}). More precisely,  there exists a subspace~$\mathcal U$ of~$\mathcal B_r$ such that $\dim\mathcal U=\operatorname{codim}\mathcal S(0)<\infty$ and
\begin{equation}\label{SPLIT}
\mathcal B_r=\mathcal S(0)\oplus \mathcal U.
\end{equation}
\begin{Claim}\label{LEM2}
For each bounded function $z\colon \mathbb Z^+_0\to X$, 
there exists a unique bounded function $x\colon \mathbb Z^+_{-r} \to X$ with $x_0\in \mathcal U$ which satisfies ~\eqref{adm}. Moreover, there exists a constant $C>0$, independent of $z$, such that 
\begin{equation}\label{cgt}
\sup_{n\geq-r}|x(n)| \le C\sup_{n\geq0}|z(n)|.
\end{equation}
\end{Claim}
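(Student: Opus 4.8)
The plan is to prove existence and uniqueness separately, and then derive the norm bound \eqref{cgt} from uniqueness via a closed graph / uniform boundedness argument.

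\emph{Uniqueness.} Suppose $x^{(1)}$ and $x^{(2)}$ are two bounded solutions of \eqref{adm} with $x^{(1)}_0, x^{(2)}_0 \in \mathcal U$. Their difference $w = x^{(1)} - x^{(2)}$ is a bounded solution of the homogeneous equation \eqref{LDE} on $\mathbb Z^+_0$, so $w_n = T(n,0) w_0$ stays bounded, which by definition of the stable subspace means $w_0 \in \mathcal S(0)$. But also $w_0 = x^{(1)}_0 - x^{(2)}_0 \in \mathcal U$, and since $\mathcal B_r = \mathcal S(0) \oplus \mathcal U$ by \eqref{SPLIT}, we get $w_0 = 0$. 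Then by the uniqueness of solutions to \eqref{LDE} with prescribed initial segment at $n=0$, $w(n) = 0$ for all $n \geq -r$, i.e. $x^{(1)} = x^{(2)}$.

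\emph{Existence.} Given bounded $z \colon \mathbb Z^+_0 \to X$, Claim~\ref{LEM1} produces \emph{some} bounded $\tilde x \colon \mathbb Z^+_{-r} \to X$ satisfying \eqref{adm}. Using \eqref{SPLIT}, write $\tilde x_0 = \phi_s + \phi_u$ with $\phi_s \in \mathcal S(0)$ and $\phi_u \in \mathcal U$. Let $u$ be the solution of the homogeneous equation \eqref{LDE} on $\mathbb Z^+_0$ with $u_0 = \phi_s$; since $\phi_s \in \mathcal S(0)$, $u$ is bounded on $\mathbb Z^+_{-r}$ (boundedness on $[-r,0]$ is clear, and on $\mathbb Z^+_0$ it is $\sup_{n\geq 0}\|T(n,0)\phi_s\| < \infty$). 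Then $x := \tilde x - u$ is bounded, satisfies \eqref{adm} (subtracting the homogeneous equation for $u$ from \eqref{adm} for $\tilde x$), and $x_0 = \tilde x_0 - \phi_s = \phi_u \in \mathcal U$. This gives the desired solution.

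\emph{The norm bound.} This is the step I expect to require the most care. The idea is to set up a bounded linear map and invoke the open mapping / closed graph theorem. Let $\mathcal Z$ be the Banach space of bounded functions $z \colon \mathbb Z^+_0 \to X$ with the supremum norm, and let $\mathcal Y$ be the Banach space of bounded functions $x \colon \mathbb Z^+_{-r} \to X$ with the supremum norm. By existence and uniqueness just established, there is a well-defined linear map $G \colon \mathcal Z \to \mathcal Y$ sending $z$ to the unique bounded solution $x$ of \eqref{adm} with $x_0 \in \mathcal U$; linearity follows from uniqueness. To see $G$ is bounded it suffices, by the closed graph theorem, to check its graph is closed: if $z^{(k)} \to z$ in $\mathcal Z$ and $G(z^{(k)}) = x^{(k)} \to x$ in $\mathcal Y$, then passing to the limit pointwise in \eqref{adm} (using continuity of each $L_n$ as in the proof of Claim~\ref{subcomplete}) shows $x$ satisfies \eqref{adm}; moreover $x^{(k)}_0 \in \mathcal U$ and $\mathcal U$ is finite-dimensional hence closed, so $x_0 \in \mathcal U$, whence $x = G(z)$ by uniqueness. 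Thus $G$ is bounded, and \eqref{cgt} holds with $C = \|G\|$, which is independent of $z$. This completes the proof of Claim~\ref{LEM2}.
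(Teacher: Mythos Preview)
Your proof is correct and follows essentially the same approach as the paper: existence via Claim~\ref{LEM1} plus the splitting~\eqref{SPLIT} and subtraction of a stable homogeneous solution, uniqueness via $\mathcal S(0)\cap\mathcal U=\{0\}$, and the norm bound via the Closed Graph Theorem applied to the solution map $z\mapsto x$, using that $\mathcal U$ is finite-dimensional and hence closed. The only cosmetic difference is that you present uniqueness before existence.
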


\begin{proof}[Proof of Claim~\ref{LEM2}]
By Claim~\ref{LEM1}, there exists a bounded function $\tilde x\colon\mathbb Z^+_{-r}\to X$ that satisfies
\[
\tilde x(n+1)=L_n(\tilde x_n)+z(n), \qquad n\in \mathbb Z^+_0.
\]
On the other hand, \eqref{SPLIT} implies the existence of~$\phi_1\in \mathcal S(0)$ and $\phi_2\in \mathcal U$ such that 
\[
\tilde x_0=\phi_1+\phi_2.
\]
Define $x\colon\mathbb Z^+_{-r} \to X$ by 
\[
x(n)=\tilde x(n)-y(n), \qquad n\geq-r,
\]
where $y$ is a solution of Eq.~\eqref{LDE} with initial value $y_0=\phi_1$. Since $y_0=\phi_1\in\mathcal S(0)$, we have that $\sup_{n\geq-r} |y(n)|<\infty$. Then, $x$ satisfies~\eqref{adm}, $x_0=\tilde x_0-\phi_1=\phi_2\in \mathcal U$ and $\sup_{n\geq-r}|x(n)|<\infty$. 
We claim that $x$ with the desired properties is unique. Indeed, if $\bar x$ is an arbitrary function with the desired properties, then  $x_0-\bar x_0\in \mathcal U\cap \mathcal S(0)=\{0\}$. Thus, $x_0=\bar x_0$ and hence $x=\bar x$ identically on $\mathbb Z^+_{-r}$.

Finally, we show the existence of a constant $C>0$ such that~\eqref{cgt} holds. 
Let $\mathcal X_0$ and $\mathcal X_{-r}$ denote the Banach space of all bounded $X$-valued functions defined on $\mathbb Z^+_0$ and $\mathbb Z^+_{-r}$, respectively, equipped with the supremum norm.
For $z\in\mathcal X_0$, define 
$\mathcal F(z)=x$, where $x$ is the unique bounded solution of the nonhomogeneous equation~\eqref{adm} with $x_0\in\mathcal U$.
(The existence and uniqueness of~$x$ is guaranteed by the first part of the proof.) Evidently, $\mathcal F(z)=x\in\mathcal X_{-r}$ for $z\in\mathcal X_0$ and $\mathcal F \colon \mathcal X_0\to \mathcal X_{-r}$ is a linear operator.
We will now observe that $\mathcal F$ is a closed operator. Indeed, let  $(z^k)_{k\in \mathbb Z^+}$ be a sequence in $\mathcal X_0$ such that $z^k\to z$ for some $z\in\mathcal X_0$ and $x^k:=\mathcal F (z^k) \to x$ for some $x\in\mathcal X_{-r}$. Then, letting $k\to +\infty$ in
\[x^k(n+1)=L_n(x_n^k)+z^k(n) \]
for each fixed $n$, we get that
\[x(n+1)=L_n(x_n)+z(n), \qquad n\in \mathbb Z^+_0.\]
That is, $x$ satisfies~\eqref{adm}. Now, since $x^k_0\in\mathcal U$ for $k\in\mathbb Z^+_0$ and $\mathcal U$ is a finite-dimensional and hence a closed subset of~$\mathcal B_r$, we have that $x_0=\lim_{k\to\infty}x^k_0\in\mathcal U$. Therefore, $x\in\mathcal X_{-r}$ is a bounded function satisfying~\eqref{adm} with $x_0\in\mathcal U$. Hence $\mathcal F(z)=x$ which shows that 
$\mathcal F \colon \mathcal X_0\to \mathcal X_{-r}$
is a closed operator. According to the Closed Graph Theorem (see, e.g., \cite[Theorem~4.2-I, p.~181]{Taylor}), $\mathcal F$ is bounded, which implies that~\eqref{cgt} holds with $C=\|\mathcal F\|$, the operator norm of~$\mathcal F$.
\end{proof}

For $n\in \mathbb Z^+$, define
\[ \mathcal U(n)=T(n,0)\mathcal U
\]
so that $\mathcal U(0)=\mathcal U$.
It is easily seen that 
\begin{equation}\label{eq: invariance S U}
T(n,m)\mathcal S(m)\subset \mathcal S(n) \quad \text{and} \quad T(n,m)\mathcal U(m)=\mathcal U(n)
\end{equation}
whenever $n,m\in\mathbb Z^+_0$ with $n\geq m$.

\begin{Claim}\label{lem: invertibility}
For $n,m\in \mathbb Z^+_0$ with $n\geq m$, $T(n,m)\rvert_{\mathcal U(m)} \colon \mathcal U(m)\to \mathcal U(n)$ is invertible.
\end{Claim}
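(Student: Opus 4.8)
The plan is to separate the two halves of invertibility. Surjectivity of $T(n,m)\rvert_{\mathcal U(m)}\colon\mathcal U(m)\to\mathcal U(n)$ requires nothing new: it is exactly the second relation in~\eqref{eq: invariance S U}, namely $T(n,m)\mathcal U(m)=\mathcal U(n)$. So the entire content of the claim lies in injectivity, and for that I would reduce everything to the base point $m=0$ and then invoke the defining property of the stable subspace~$\mathcal S(0)$ together with the transversality coming from~\eqref{SPLIT}.

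First I would establish that $T(n,0)\rvert_{\mathcal U}$ is injective for every $n\in\mathbb Z^+_0$. Suppose $\psi\in\mathcal U$ satisfies $T(n,0)\psi=0$. Using the cocycle identity~\eqref{ev2}, $T(k,0)\psi=T(k,n)T(n,0)\psi=0$ for all $k\geq n$, so
\[
\sup_{k\geq0}\|T(k,0)\psi\|=\max_{0\leq k\leq n-1}\|T(k,0)\psi\|<\infty,
\]
which means $\psi\in\mathcal S(0)$. Since also $\psi\in\mathcal U$ and $\mathcal U\cap\mathcal S(0)=\{0\}$ by~\eqref{SPLIT}, we get $\psi=0$.

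Next I would deduce the general case. Fix $n\geq m$ and take $\phi\in\mathcal U(m)$ with $T(n,m)\phi=0$. By the definition $\mathcal U(m)=T(m,0)\mathcal U$, write $\phi=T(m,0)\psi$ with $\psi\in\mathcal U$. Then, again by~\eqref{ev2}, $T(n,0)\psi=T(n,m)T(m,0)\psi=T(n,m)\phi=0$, so $\psi=0$ by the previous paragraph, and hence $\phi=0$. Thus $T(n,m)\rvert_{\mathcal U(m)}$ is injective, and combined with surjectivity it is a linear bijection. To conclude that it is \emph{invertible} in the topological sense I would note that, since $\mathcal U$ is finite-dimensional and $T(n,0)\rvert_{\mathcal U}$ is injective, each $\mathcal U(n)=T(n,0)\mathcal U$ is finite-dimensional with $\dim\mathcal U(n)=\dim\mathcal U$; a linear bijection between finite-dimensional normed spaces is automatically a homeomorphism, so the inverse is bounded.

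The main obstacle here is essentially bookkeeping rather than analysis: the only point that needs care is to use $\mathcal U(m)=T(m,0)\mathcal U$ literally (so that every element of $\mathcal U(m)$ admits a preimage \emph{inside}~$\mathcal U$), which is what allows the reduction to $m=0$; after that, the characterization of $\mathcal S(0)$ via boundedness of forward orbits and the transversality $\mathcal U\cap\mathcal S(0)=\{0\}$ close the argument immediately.
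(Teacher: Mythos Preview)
Your proof is correct and follows essentially the same route as the paper: both reduce injectivity to the base point by writing $\phi=T(m,0)\bar\phi$ with $\bar\phi\in\mathcal U$, observe that $T(n,0)\bar\phi=0$ forces the forward orbit of $\bar\phi$ to be bounded, and then conclude $\bar\phi=0$ from $\mathcal U\cap\mathcal S(0)=\{0\}$. The only cosmetic difference is that the paper packages this last step as the uniqueness statement in Claim~\ref{LEM2} with $z\equiv0$, whereas you invoke the splitting~\eqref{SPLIT} directly; your additional remark on boundedness of the inverse via finite-dimensionality is a harmless extra.
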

\begin{proof}[Proof of Claim~\ref{lem: invertibility}]
In view of~\eqref{eq: invariance S U}, we only need to show that the operator above is injective. Let $n,m\in \mathbb Z^+_0$ with $n\geq m$ and $\phi \in \mathcal U(m)$ be such that $T(n,m)\phi=0$. Since $\phi \in \mathcal U(m)$, there exists $\bar{\phi}\in \mathcal U(0)=\mathcal U$ such that $\phi=T(m,0)\bar{\phi}$. Let $x\colon \mathbb Z^+_{-r}\to X$ be the solution of~\eqref{LDE} with initial value $x_0=\bar{\phi}$. Since $x(k)=0$ for all sufficiently large $k\in \mathbb Z^+$, we have that $\sup_{k\geq-r}|x(k)|<\infty$. It follows from the uniqueness in Claim~\ref{LEM2}, applied for $z\equiv 0$, that $x\equiv 0$. This implies that $\bar{\phi}=\phi= 0$.
\end{proof}
\begin{Claim}\label{splitting}
For each $n\in \mathbb Z^+_0$, $\mathcal B_r$ can be decomposed into the direct sum
\begin{equation}\label{SPLImainresult1} 
\mathcal B_r=\mathcal S(n)\oplus \mathcal U(n).
\end{equation}
\end{Claim}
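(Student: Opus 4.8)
The plan is to verify directly the two defining properties of an (algebraic) direct sum: that $\mathcal{S}(n)\cap\mathcal{U}(n)=\{0\}$ and that $\mathcal{S}(n)+\mathcal{U}(n)=\mathcal{B}_r$, drawing on the decomposition~\eqref{SPLIT}, the invariance relations~\eqref{eq: invariance S U}, and Claim~\ref{ee}.

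For the triviality of the intersection I would argue as follows. Suppose $\phi\in\mathcal{S}(n)\cap\mathcal{U}(n)$. Since $\phi\in\mathcal{U}(n)=T(n,0)\mathcal{U}$, write $\phi=T(n,0)\bar{\phi}$ with $\bar{\phi}\in\mathcal{U}$. By the cocycle identity~\eqref{ev2}, for every $k\geq n$ we have $T(k,n)\phi=T(k,n)T(n,0)\bar{\phi}=T(k,0)\bar{\phi}$, so the hypothesis $\phi\in\mathcal{S}(n)$ forces $\sup_{k\geq n}\|T(k,0)\bar{\phi}\|<\infty$; adjoining the finitely many quantities $\|T(k,0)\bar{\phi}\|$ for $0\leq k<n$ gives $\sup_{k\geq 0}\|T(k,0)\bar{\phi}\|<\infty$, i.e. $\bar{\phi}\in\mathcal{S}(0)$. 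By~\eqref{SPLIT} we have $\mathcal{S}(0)\cap\mathcal{U}=\{0\}$, hence $\bar{\phi}=0$ and therefore $\phi=0$.

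For the sum, I would invoke Claim~\ref{ee} with $m=0$, which yields $\mathcal{B}_r=T(n,0)\mathcal{B}_r+\mathcal{S}(n)$. Decomposing $\mathcal{B}_r=\mathcal{S}(0)\oplus\mathcal{U}$ via~\eqref{SPLIT} and using~\eqref{eq: invariance S U} with $m=0$ (namely $T(n,0)\mathcal{S}(0)\subset\mathcal{S}(n)$ and $T(n,0)\mathcal{U}=\mathcal{U}(n)$), one gets $T(n,0)\mathcal{B}_r=T(n,0)\mathcal{S}(0)+T(n,0)\mathcal{U}\subset\mathcal{S}(n)+\mathcal{U}(n)$. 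Combining, $\mathcal{B}_r=T(n,0)\mathcal{B}_r+\mathcal{S}(n)\subset\mathcal{S}(n)+\mathcal{U}(n)\subset\mathcal{B}_r$, so $\mathcal{B}_r=\mathcal{S}(n)+\mathcal{U}(n)$. Together with the previous paragraph this gives~\eqref{SPLImainresult1}; for $n=0$ the argument simply reproduces~\eqref{SPLIT}.

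I do not expect a genuine obstacle here: the statement is essentially a formal consequence of Claims~\ref{ee} and~\ref{lem: invertibility} together with~\eqref{SPLIT} and~\eqref{eq: invariance S U}, and the only point requiring care is the bookkeeping with the cocycle property~\eqref{ev2} in the first step. I would also record, as a by-product needed later, that Claim~\ref{lem: invertibility} with $m=0$ makes $T(n,0)\rvert_{\mathcal{U}}\colon\mathcal{U}\to\mathcal{U}(n)$ an isomorphism, so $\dim\mathcal{U}(n)=\dim\mathcal{U}=\operatorname{codim}\mathcal{S}(0)<\infty$; in particular $\mathcal{S}(n)$ has finite codimension in $\mathcal{B}_r$, which will matter when the projections $P_n$ associated with~\eqref{SPLImainresult1} are examined.
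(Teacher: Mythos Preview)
Your argument is correct and, for the spanning part, takes a cleaner route than the paper. The paper establishes $\mathcal B_r=\mathcal S(n)+\mathcal U(n)$ by rerunning the construction from Claim~\ref{ee} but invoking Claim~\ref{LEM2} (rather than Claim~\ref{LEM1}) so that the auxiliary bounded solution~$x$ of~\eqref{adm} has $x_0\in\mathcal U$; then $x_n\in\mathcal U(n)$ and $\phi-x_n\in\mathcal S(n)$. You instead feed the already–proved identity $\mathcal B_r=T(n,0)\mathcal B_r+\mathcal S(n)$ from Claim~\ref{ee} through the splitting~\eqref{SPLIT} and the invariance relations~\eqref{eq: invariance S U}, which avoids any appeal to Claim~\ref{LEM2} (and hence to the Closed Graph Theorem) at this stage. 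For the intersection, the two arguments are essentially equivalent: the paper phrases it via the uniqueness clause of Claim~\ref{LEM2} applied with $z\equiv 0$, while you unwind that uniqueness directly to $\bar\phi\in\mathcal S(0)\cap\mathcal U=\{0\}$. Your approach is more self-contained; the paper's has the minor advantage of reusing the machinery (Claim~\ref{LEM2}) that will in any case be needed for Claims~\ref{C}--\ref{exp est unstable}.
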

\begin{proof}[Proof of Claim~\ref{splitting}]
Since $\mathcal U(0)=\mathcal U$, for $n=0$, the decomposition~\eqref{SPLImainresult1} follows immediately from~\eqref{SPLIT}. Now suppose that
$n\geq 1$ and let $\phi \in \mathcal B_r$ be arbitrary. Let $v\colon \mathbb Z^+_{n-r}\to X$ and $z\colon \mathbb Z^+_0\to X$ be as in the proof of Claim~\ref{ee}. Since $\sup_{k\geq0} |z(k)|<\infty$, by Claim~\ref{LEM2} there exists a  unique function $x\colon\mathbb Z^+_{-r} \to X$ such that $x_0\in \mathcal U$, $\sup_{k\geq{-r}}|x(k)|<\infty$ and~\eqref{adm} holds. By the same reasoning as in the proof of Claim~\ref{ee}, we have that $x_n-\phi\in \mathcal S(n)$. Moreover, $x_n=T(n,0)x_0\in \mathcal U(n)$. Consequently,
\[
\phi=(\phi-x_n)+x_n\in \mathcal S(n)+\mathcal U(n).
\]
Suppose now that $\phi \in \mathcal S(n)\cap \mathcal U(n)$. Then, there exists $\bar{\phi}\in \mathcal U$ such that $\phi=T(n,0)\bar{\phi}$. Consider the unique solution $x\colon \mathbb Z^+_{-r}\to X$ of Eq.~\eqref{LDE} with $x_0=\bar{\phi}$. Then, $x$ satisfies~\eqref{adm} with $z\equiv 0$, $x_0=\bar\phi\in \mathcal U$ and $\sup_{k\geq{-r}}|x(k)|<\infty$. By the uniqueness in Claim~\ref{LEM2}, we  conclude that $x\equiv 0$. Therefore, $\bar\phi=0$ which implies that $\phi=0$.
The proof of the Claim is completed.
\end{proof}

\begin{Claim}\label{C}
There exists $Q>0$ such that 
\[
\|T(n,m)\phi \| \le Q\|\phi \|, 
\]
for every $n,m\in \mathbb Z^+_0$ with $n\ge m$ and $\phi \in \mathcal S(m)$.
\end{Claim}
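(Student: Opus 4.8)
The plan is to derive the uniform bound from the input--output estimate~\eqref{cgt} of Claim~\ref{LEM2}, handling separately the ``late'' indices $m\ge r+1$, for which a clean cut-off construction applies, and the finitely many ``early'' indices $0\le m\le r$, which are reduced to the previous case via the cocycle law~\eqref{ev2} and the growth bound~\eqref{ev3}.

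First, for a fixed $m\ge r+1$ and $\phi\in\mathcal S(m)$, let $x\colon\mathbb Z^+_{m-r}\to X$ be the solution of~\eqref{LDE} with $x_m=\phi$; it is bounded by the definition of $\mathcal S(m)$. I would set $w\colon\mathbb Z^+_{-r}\to X$, $w(k)=x(k)$ for $k\ge m-r$ and $w(k)=0$ for $-r\le k\le m-r-1$ (a range that contains $[-r,0]$ precisely because $m\ge r+1$), and then $z(k)=w(k+1)-L_k(w_k)$ for $k\in\mathbb Z^+_0$, so that $w$ satisfies~\eqref{adm} with this forcing term by construction. The two facts I would check are: (a)~$z(k)=0$ for $k\ge m$ (there $w_k=x_k$, $w(k+1)=x(k+1)$, and $x$ solves~\eqref{LDE} on $\mathbb Z^+_m$), while for $0\le k\le m-1$ every value $w(k+1)$ and $w(k+\theta)$, $-r\le\theta\le0$, is either $0$ or a value of $\phi$ in view of the initial condition $x_m=\phi$, so that $|z(k)|\le(1+M)\|\phi\|$ and hence $\sup_{k\ge0}|z(k)|\le(1+M)\|\phi\|$; (b)~$w_0=0\in\mathcal U$, again because $m\ge r+1$. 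Since $w$ is bounded, satisfies~\eqref{adm}, and has $w_0\in\mathcal U$, the uniqueness part of Claim~\ref{LEM2} forces $w$ to coincide with the distinguished bounded solution furnished there, so~\eqref{cgt} yields $\sup_{k\ge-r}|w(k)|\le C(1+M)\|\phi\|$. As $w$ agrees with $x$ on $\mathbb Z^+_{m-r}$ and $\|T(n,m)\phi\|=\|x_n\|\le\sup_{k\ge m-r}|x(k)|$ for every $n\ge m$, this gives $\|T(n,m)\phi\|\le Q_1\|\phi\|$ with $Q_1:=C(1+M)$, uniformly over all $m\ge r+1$, $n\ge m$ and $\phi\in\mathcal S(m)$.

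For $0\le m\le r$ and $\phi\in\mathcal S(m)$, I would observe that $T(r+1,m)\phi\in\mathcal S(r+1)$ by~\eqref{eq: invariance S U} (equivalently, from the definition of the stable subspaces and~\eqref{ev2}). For $n\ge r+1$, the cocycle identity~\eqref{ev2}, the estimate of the previous paragraph applied at $r+1$, and~\eqref{ev3} give $\|T(n,m)\phi\|=\|T(n,r+1)T(r+1,m)\phi\|\le Q_1\|T(r+1,m)\phi\|\le Q_1e^{\omega(r+1)}\|\phi\|$, while for $m\le n\le r+1$ one simply has $\|T(n,m)\phi\|\le e^{\omega(r+1)}\|\phi\|$ by~\eqref{ev3}. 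Setting $Q:=(Q_1+1)e^{\omega(r+1)}$ (note $\omega\ge0$) then covers all $m\in\mathbb Z^+_0$, all $n\ge m$ and all $\phi\in\mathcal S(m)$, which is the asserted bound.

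The only genuinely delicate point is the bookkeeping in step~(a): one must verify that the cut-off $w$ produces a forcing term $z$ that (i)~vanishes for $k\ge m$, so that $w$ restricted to $\mathbb Z^+_m$ is a true solution of~\eqref{LDE}, and (ii)~is bounded by a multiple of $\|\phi\|$ with a constant not depending on $m$. The fact that the constant $C$ in Claim~\ref{LEM2} is independent of $z$ is exactly what makes the final constant $Q$ independent of $m$; everything else is routine.
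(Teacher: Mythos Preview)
Your argument is correct. It is close in spirit to the paper's proof---both construct a cut-off of the stable solution, feed it into the input--output estimate of Claim~\ref{LEM2}, and read off a uniform bound---but the two differ in where the cut-off is placed. You set $w(k)=x(k)$ for $k\ge m-r$ and $w(k)=0$ below, which preserves the entire initial segment $x_m=\phi$; this makes the bound on $z$ immediate (each relevant value of $w$ is either $0$ or a coordinate of $\phi$), but forces $m\ge r+1$ in order to have $w_0=0\in\mathcal U$, and hence the separate treatment of $0\le m\le r$. The paper instead truncates one step later, taking $x(k)=u(k)$ for $k\ge m+1$ and $x(k)=0$ for $-r\le k\le m$; then $x_0=0$ automatically for every $m\ge0$, so no case split is needed, at the modest cost of having to estimate $z$ on the window $m\le k\le m+r$ using~\eqref{funibdd} and~\eqref{ev3}, which yields the constant $2Me^{\omega r}$ in place of your $1+M$. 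Both routes are valid; the paper's is slightly cleaner in that it avoids the split, while yours gives a somewhat more transparent bound on the forcing term.
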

\begin{proof}[Proof of Claim~\ref{C}]
Fix $m\in \mathbb Z^+_0$ and $\phi \in \mathcal S(m)$. Let $u\colon \mathbb Z^+_{m-r} \to X$ be the solution of Eq.~\eqref{LDE} with initial value $u_m=\phi$. 
Define $x\colon \mathbb Z^+_{-r} \to X$ and $z\colon\mathbb Z^+_0\to X$ by 
\[
x(k)=\begin{cases}
 u(k)&\text{ for $k\geq{m+1}$;} \\
0 &\text{ for $-r\leq k\leq m$}
\end{cases}
\]
and
\[
z(k)=x(k+1)-L_k(x_k),\qquad k\geq0,
\]
respectively.
Evidently, \eqref{adm} is satisfied, and since $\phi \in \mathcal S(m)$, we have that $\sup_{k\geq{-r}}|x(k)|<\infty$.
Moreover, $x_0=0\in \mathcal U$. Furthermore, $z(k)=0$ for $0\leq k\leq m-1$ 
 and $k\geq m+r+1$.
 Thus, using~\eqref{funibdd}, ~\eqref{ev3} and the fact that $\|x_k\|\leq \|u_k\|$, we find that 
\[
\begin{split}
\sup_{k\geq0}|z(k)|&=\sup_{m\leq k \leq m+r}|z(k)|\\
&\le \sup_{m\leq k \leq m+r }|x(k+1)|+\sup_{m\leq k \leq m+r }|L_kx_k|\\
&\leq \sup_{m\leq k \leq m+r}|u(k+1)|+M\sup_{m\leq k \leq m+r }\|x_k\|\\
&\leq \sup_{m\leq k \leq m+r}|L_k(u_{k})|+M\sup_{m\leq k \leq m+r}\|u_k\|\\
&\leq 2M\sup_{m\leq k \leq m+r }\|u_k\|\\
&= 2 M\sup_{m\leq k \leq m+r}\|T(k,m)\phi\|\\
&\leq 2Me^{\omega r}\| \phi\|.
\end{split}
\]
From the last inequality and conclusion~\eqref{cgt} of Claim~\ref{LEM2}, we conclude that 
\[
\sup_{k\geq{m+1}}|u(k)| \le \sup_{k\geq{-r}}|x(k)|  \le C\sup_{k\geq 0}|z(k)| \le 2CMe^{\omega r}\| \phi\|.
\]
Hence, 
\[
\|T(n,m)\phi \|=\|u_n\| \le 2CMe^{\omega r}\| \phi\|, \qquad n\ge m+r+1.
\]
Since~\eqref{funibdd} implies that 
\[
\|T(n,m)\phi \| \le e^{\omega r}\|\phi\|, \qquad m\leq n\leq m+r,
\]
the conclusion of the claim holds with
\[
Q:=\max \{\,e^{\omega r}, 2CMe^{\omega r}\,\}>0.
\]
\end{proof}

\begin{Claim}\label{exp est stable}
There exist $D, \lambda >0$ such that 
\[
\|T(n,m)\phi \| \le De^{-\lambda (n-m)}\|\phi\|,
\]
for every $n,m\in \mathbb Z^+_0$ with $n\geq m$ and $\phi \in \mathcal S(m)$.
\end{Claim}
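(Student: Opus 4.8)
The plan is to upgrade the uniform bound of Claim~\ref{C} to an exponential one in two steps: first extract a $1/(n-m)$ decay rate from the admissibility results, then bootstrap it to an exponential rate using only the evolution property.

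\emph{Step 1 (polynomial decay).} Fix $m\geq1$ and $\phi\in\mathcal S(m)$, and let $u$ be the solution of~\eqref{LDE} with $u_m=\phi$; by Claim~\ref{C}, $\|u_n\|\leq Q\|\phi\|$ for $n\geq m$. Consider the ``ramped'' function $x\colon\mathbb Z^+_{-r}\to X$ defined by $x(n)=(n-m)u(n)$ for $n\geq m$ and $x(n)=0$ for $-r\leq n\leq m$, and let $z(n)=x(n+1)-L_n(x_n)$, so that $x$ satisfies~\eqref{adm}. Using $u(n+1)=L_n(u_n)$ for $n\geq m$ and~\eqref{unibdd}, one checks that $z(n)=0$ for $0\leq n\leq m-1$ and $\sup_{n\geq0}|z(n)|\leq\kappa\|\phi\|$ for a constant $\kappa$ depending only on $r$, $K$ and $Q$. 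By Claim~\ref{LEM2} there is a unique bounded $\tilde x$ with $\tilde x_0\in\mathcal U$ satisfying~\eqref{adm} with this $z$, and $\|\tilde x\|_\infty\leq C\kappa\|\phi\|$. Since $x$ and $\tilde x$ share the defect $z$, $w:=x-\tilde x$ solves~\eqref{LDE} on $\mathbb Z^+_0$; as $m\geq1$ we have $x_0=0$, hence $w_0=-\tilde x_0\in\mathcal U$ and $w_n=T(n,0)w_0\in\mathcal U(n)$ for all $n$, while $x_n=(n-m)u_n\in\mathcal S(n)$ for $n\geq m$ by~\eqref{eq: invariance S U}. Decomposing $x_n=w_n+\tilde x_n$ along the splitting $\mathcal B_r=\mathcal S(n)\oplus\mathcal U(n)$ of Claim~\ref{splitting}, and using that the associated projections are bounded uniformly in $n$ (this uses the input--output technique of~\cite{BDV}), we obtain $(n-m)\|u_n\|=\|x_n\|\leq C'\|\phi\|$ with $C'$ independent of $m,\phi$; that is,
\[
\|T(n,m)\phi\|\leq\frac{C'}{\,n-m+1\,}\,\|\phi\|,\qquad n\geq m.
\]

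\emph{Step 2 (exponential rate).} Choose $N_0$ with $C'/(N_0+1)\leq\tfrac12$, so that $\|T(m+N_0,m)\phi\|\leq\tfrac12\|\phi\|$ for every $m\geq1$ and $\phi\in\mathcal S(m)$. For $n\geq m\geq1$ write $n-m=qN_0+s$ with $0\leq s<N_0$; applying the half-estimate successively along $m,m+N_0,\dots,m+qN_0$ (each intermediate segment remaining in the corresponding stable subspace by~\eqref{eq: invariance S U}) and Claim~\ref{C} for the last $s$ steps gives $\|T(n,m)\phi\|\leq Q\,2^{-q}\|\phi\|\leq 2Q\,e^{-\lambda(n-m)}\|\phi\|$ with $\lambda=(\log2)/N_0$. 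The case $m=0$ reduces to $m=1$ via~\eqref{ev3} applied to $T(1,0)$, after enlarging $D$.

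The main obstacle is Step~1 — concretely, bounding the unstable component $\tilde x_0\in\mathcal U$ of the bounded solution $\tilde x$, equivalently obtaining a uniform-in-$n$ bound on the dichotomy projections attached to the decompositions $\mathcal B_r=\mathcal S(n)\oplus\mathcal U(n)$. This is automatic when $\dim X<\infty$ but in a general Banach space is exactly where the input--output estimates of~\cite{BDV} are required; the remaining computations (the bound on the defect $z$ and the telescoping of Step~2) are routine.
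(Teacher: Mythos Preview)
Your Step~2 and the defect computation in Step~1 are fine; the gap is exactly where you say it is, and within this paper it is circular. To extract a bound on $\|x_n\|$ from the bound on $\|\tilde x_n\|$ you write $x_n=P_n\tilde x_n$ and need $\sup_n\|P_n\|<\infty$. In the paper this is Claim~\ref{Claim: projectionsbounded}, and its proof (via \cite[Lemma~3.1]{HM}) explicitly uses Claims~\ref{exp est stable} and~\ref{exp est unstable} as input. Deferring the projection bound to~\cite{BDV} without a precise statement whose hypotheses are verified at this point does not close the loop.

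The paper avoids projections altogether by choosing a different weight. The reason you are forced to project is that your $x(k)=(k-m)u(k)$ is \emph{unbounded} (you only know $u$ is bounded), so Claim~\ref{LEM2} produces a genuinely different bounded solution~$\tilde x$ and you must compare the two. The paper instead freezes the weight beyond a fixed index: with a target $n\geq m+N$ in hand, it sets
\[
\chi(k)=\sum_{j=m}^{k-1}\|u_j\|^{-1}\ \text{for }m<k\leq n-r,\qquad \chi(k)=\chi(n-r)\ \text{for }k\geq n-r,
\]
and $x(k)=\chi(k)u(k)$. Now $x$ is bounded (both $\chi$ and $u$ are) and $x_0=0\in\mathcal U$, so by the \emph{uniqueness} part of Claim~\ref{LEM2} this $x$ \emph{is} the solution that Claim~\ref{LEM2} produces, and one gets $\sup_k|x(k)|\leq C\sup_k|z(k)|$ directly. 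The specific form of~$\chi$ then yields both a defect bound $\sup_k|z(k)|\leq MQ(r+1)$ (using Claim~\ref{C} to control $\|u_j\|^{-1}|u(k+\theta)|$) and the lower bound
\[
\|x_n\|\geq\|u_n\|\sum_{j=m}^{n-r-1}\|u_j\|^{-1}\geq\|u_n\|\,\frac{n-m-r}{Q\|\phi\|},
\]
which together contradict $\|u_n\|>e^{-1}\|\phi\|$ once $n-m>eCMQ^2(r+1)+r$. No projection is ever needed; Claim~\ref{Claim: projectionsbounded} is proved afterward, using Claims~\ref{exp est stable} and~\ref{exp est unstable}.
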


\begin{proof}[Proof of Claim~\ref{exp est stable}]
We claim that if
\begin{equation}\label{Nchoice}
N>eCMQ^2(r+1)+r+1
\end{equation}
with~$Q$ as in Claim~\ref{C},
then for every $m\in \mathbb Z^+_0$ and $\phi\in\mathcal S(m)$,
\begin{equation}\label{e}
\|T(n,m)\phi \| \le \frac 1 e \|\phi \|\qquad\text{for $n\geq m+N$}.
\end{equation}
Suppose, for the sake of contradiction, that \eqref{Nchoice} holds and there exist $m\in \mathbb Z^+_0$ and $\phi\in\mathcal S(m)$ such that
\begin{equation}\label{low}
\|T(n,m)\phi \|>\frac 1 e \|\phi \|\qquad\text{for some $n\geq m+N$}.
\end{equation}
Fix $n\geq m+N$ such that the first inequality in~\eqref{low} holds and let $u$ denote the solution of the homogeneous equation Eq.~\eqref{LDE} with initial value $u_m=\phi$ so that $u_{n}=T(n,m)\phi$. Therefore, the first inequality in~\eqref{low} can be written as
\begin{equation}\label{lowup}
\|u_n\|>\frac{1}{e}\|\phi\|. 
\end{equation}
In view of~\eqref{eq: invariance S U}, $\phi\in\mathcal S(m)$ implies that $u_j=T(j,m)\phi\in\mathcal S(j)$ for $j\geq m$. Therefore, by Claim~\ref{C}, we have that
\begin{equation*}
\|u_{n}\|=\|T(n,j)u_j\|\leq Q\|u_j\|\qquad \text{whenever $m\leq j\leq n$}.
\end{equation*}
From the last inequality and~\eqref{lowup}, we find that $\|u_j\|>0$ whenever $ m\leq j\leq n$.
This, together with the fact that $n\ge m+N> m+r+1$ and hence $n-r-1>m$, implies that we can define a function $x\colon\mathbb Z^+_{-r}\to X$  by
\[
x(k)=\begin{cases}
\chi(k)u(k)& \text{for $k\geq m$,}\\
0 & \text{for $-r\leq k\leq m-1$,}
\end{cases}
\]
where
\[
 \chi(k)=\begin{cases}
 0\qquad&\text{for $-r\leq k\leq m$},\\[2pt]
 \displaystyle\sum_{j=m}^{k-1} \|u_j\|^{-1}
 \qquad&\text{for $m+1\leq k\leq n-r-1$}, \\[10pt]
 \displaystyle \sum_{j=m}^{n-r-1} \|u_j\|^{-1}
 \qquad&\text{for $k \geq n-r$}.
 \end{cases}
 \]
Evidently, $x$ satisfies the nonhomogeneous equation
\begin{equation}\label{nonhomog}
x(k+1)=L_k(x_k)+z(k),\qquad k\in \mathbb Z^+_0,
\end{equation}
where $z\colon \mathbb Z^+_0\rightarrow X$ is defined by
\[
z(k)=x(k+1)-L_k(x_k), \qquad  k\in \mathbb Z^+_0.
\]
Since $u_m=\phi\in\mathcal S(m)$ implies that $u$ is bounded on~$\mathbb Z^+_{m-r}$ and  $0\leq\chi(k)\leq \chi(n-r)$ for $k\geq{-r}$, it follows that $x$ is bounded on~$\mathbb Z^+_{-r}$. From this and~\eqref{funibdd}, we obtain that $z$ is also bounded on~$\mathbb Z^+_0$. Since $x_0=0\in \mathcal{U}$, by Claim~\ref{LEM2}, we have that
\begin{equation}\label{eq: est x z}
\sup_{k\geq{-r}}|x(k)|\leq C\sup_{k\geq0}|z(k)|.
\end{equation}
Our objective now is to estimate the norm of $z(k)$. Since~$x(k)=0$ for $-r\leq k\leq m$, we have that $z(k)=0$ for $0\leq k\leq m-1$. The function $\chi$ is constant on~$\mathbb Z^+_{n-r}$, therefore $x$ is a constant multiple of the solution~$u$ of the homogeneous equation~\eqref{LDE} on~$\mathbb Z^+_{n-r}$. Thus, $x$ also satisfies the homogeneous equation~\eqref{LDE} for $k\geq n$ and hence $z(k)=0$ for $k\geq n$. It remains to consider the case when $m\leq k\leq n-1$. Let such a~$k$ be fixed. By definition, we have
\[
\begin{split}
    z(k)&=x(k+1)-L_k(x_k)\\
    &=\chi(k+1)u(k+1)-L_k(\chi_k u_k)\\
    &=\chi(k+1)L_k(u_k)-L_k(\chi_k u_k)\\
    &=L_k(\chi(k+1)u_k-\chi_ku_k).
\end{split}
\]
Therefore, using \eqref{funibdd}, it follows that
\begin{equation*}\label{eq: est zk}
    |z(k)|\leq M\|\chi(k+1)u_k-\chi_ku_k\|.
\end{equation*}
Let $\theta\in[-r,0]\cap\mathbb Z$. Then \[
\begin{split}
    |\left(\chi(k+1)u_k-\chi_ku_k\right)(\theta)|&=|\left(\chi(k+1)-\chi (k+\theta)\right) u(k+\theta)|,
\end{split}
\]
and hence
\[
\begin{split}
    |\left(\chi(k+1)u_k-\chi_ku_k\right)(\theta)|&\leq \sum_{j=m}^{k} \|u_j\|^{-1}|u(k+\theta)| \qquad \text{whenever } k+\theta\leq m
\end{split}
\]
and 
\[
\begin{split}
    |\left(\chi(k+1)u_k-\chi_ku_k\right)(\theta)|&\leq \sum_{j=k+\theta}^{k} \|u_j\|^{-1}|u(k+\theta)| \qquad \text{whenever } k+\theta> m.
\end{split}
\]
In view of~\eqref{eq: invariance S U}, $u_m=\phi\in\mathcal S(m)$ implies that $u_j=T(j,m)\phi\in\mathcal S(j)$ for $j\geq m$. Therefore, by Claim~\ref{C}, for $m\leq j\leq k$, we have
\[
|u(k+\theta)|\leq\|u_{k}\|=\|T(k,j)u_j\|\leq Q\|u_j\|
\]
so that $\|u_j\|^{-1}|u(k+\theta)|\leq Q$. From this, we conclude that if $k+\theta\leq m$ so that $k-m\leq-\theta\leq r$, then
\[
\sum_{j=m}^{k} \|u_j\|^{-1}|u(k+\theta)|
\leq Q(k-m+1)\leq Q(r+1),
\]
while in case $k+\theta>m$, we have
\[
\sum_{j=k+\theta}^{k} \|u_j\|^{-1}|u(k+\theta)|
\leq Q(-\theta+1)\leq Q(r+1).
\]
Therefore, in both cases $k+\theta\leq m$ and $k+\theta>m$, we have that
\[
\begin{split}
    |\left(\chi(k+1)u_k-\chi_ku_k\right)(\theta)|&\leq Q(r+1).
\end{split}
\]
Since $\theta\in[-r,0]\cap\mathbb Z$ was arbitrary, this implies that 
\[
\|\chi(k+1)u_k-\chi_k u_k\|\leq Q(r+1),
\]
which, combined with~\eqref{eq: est zk}, yields
\[
|z(k)|\leq MQ(r+1).
\]
We have shown that the last inequality is valid whenever $m\leq k\leq n-1$ and $z(k)=0$ otherwise. Hence 
\[
\sup_{k\geq0}|z(k)|\leq MQ(r+1).
\]
This, together with
\eqref{eq: est x z}, implies that
\begin{equation}\label{xineq}
\sup_{k\geq-r}|x(k)|\leq CMQ(r+1).
\end{equation}
Since $n-r\geq n-N\geq m$ and~$\chi$ is nondecreasing on~$\mathbb Z^+_{-r}$, we have for $\theta\in[-r,0]\cap\mathbb Z$,
\[
|x(n+\theta)|=\chi(n+\theta)|u(n+\theta)|\geq\chi(n-r)|u(n+\theta)|
=|u(n+\theta)|\sum_{j=m}^{n-r-1} \|u_j\|^{-1}.
\]
Hence
\[
\|x_n\|\geq\|u_n\|\sum_{j=m}^{n-r-1} \|u_j\|^{-1}.
\]
According to Claim~\ref{C}, $u_m=\phi\in\mathcal S(m)$ implies that  
 $\|u_j\|=\|T(j,m)\phi\|\leq Q\|\phi\|$ for $j\geq m$. This, together with the previous inequality, yields
 \[
 \|x_n\|\geq\|u_n\|\sum_{j=m}^{n-r-1} \|u_j\|^{-1}
 \geq\|u_n\|\frac{n-m-r}{Q\|\phi\|}.
 \]
 The last inequality, combined with~\eqref{lowup} and~\eqref{xineq}, implies that
 \[
\begin{split}
    CMQ(r+1)\geq \|x_n\| 
    \geq \|u_n\|\frac{n-m-r}{Q\|\varphi\|}
    > \frac{n-m-r}{eQ}.
\end{split}
\]
However, this contradicts the fact that $n\geq m+N$ with $N$ satisfying \eqref{Nchoice}. Thus, \eqref{e} holds whenever $m\in \mathbb Z^+_0$ and $\phi\in\mathcal S(m)$.

Using~\eqref{e}, we can easily complete the proof. Choose an integer~$N$  satisfying~\eqref{Nchoice}. Let $n\geq m$. Then, $n-m=kN+h$ for some $k\in \mathbb Z^+_0$ and $0\leq h\leq N-1$. From~\eqref{e} and Claim~\ref{C}, we obtain for $\phi \in \mathcal S(m)$, 
\[
\begin{split}
\|T(n,m)\phi\| &=\|T(m+kN+h, m)\phi \|\\
&=\|T(m+kN+h,m+kN)T(m+kN,m)\phi\|\\
&\le Q\|T(m+kN, m)\phi \|\\
&\le Qe^{-k}\|\phi \|\\
&\le eQe^{-\frac{n-m}{N}}\|\phi \|.
\end{split}
\]
Hence, the conclusion of the claim holds with $D=eQ$ and $\lambda=1/N$. 
\end{proof}

\begin{Claim}\label{C-U}
There exists $Q'>0$ such that 
\[
\|T(n,m)\phi \| \le Q'\|\phi \|, 
\]
for every $n,m\in \mathbb Z^+_0$ with $n\leq m$ and $\phi \in \mathcal U(m)$.
\end{Claim}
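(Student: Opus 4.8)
The plan is to adapt the truncation argument of Claim~\ref{C} to the unstable direction: propagate a segment lying in $\mathcal{U}(m)$ backward in time, cut it off after time $m$, and apply the uniform bound~\eqref{cgt} of Claim~\ref{LEM2}. Here, for $n\le m$, $T(n,m)$ denotes $\bigl(T(m,n)\rvert_{\mathcal{U}(n)}\bigr)^{-1}\colon\mathcal{U}(m)\to\mathcal{U}(n)$, which is well defined by Claim~\ref{lem: invertibility}. Fix $m\in\mathbb{Z}^+_0$ and $\phi\in\mathcal{U}(m)$. Since $\mathcal{U}(m)=T(m,0)\mathcal{U}$ and $T(m,0)\rvert_{\mathcal{U}}$ is injective, there is a unique $\bar\phi\in\mathcal{U}=\mathcal{U}(0)$ with $T(m,0)\bar\phi=\phi$; let $u\colon\mathbb{Z}^+_{-r}\to X$ be the solution of~\eqref{LDE} with $u_0=\bar\phi$. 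Then $u_n=T(n,0)\bar\phi\in\mathcal{U}(n)$ for $0\le n\le m$, $u_m=\phi$, and $T(m,n)u_n=u_m=\phi$; hence $T(n,m)\phi=u_n$ for every $0\le n\le m$, so it suffices to bound $\|u_n\|$ by a fixed multiple of $\|\phi\|=\|u_m\|$.

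To this end, define $x\colon\mathbb{Z}^+_{-r}\to X$ by $x(k)=u(k)$ for $-r\le k\le m$ and $x(k)=0$ for $k\ge m+1$, and set $z(k)=x(k+1)-L_k(x_k)$ for $k\ge 0$, so that~\eqref{adm} holds. The function $x$ is bounded (it vanishes except on the finite set $[-r,m]\cap\mathbb{Z}$) and $x_0=u_0=\bar\phi\in\mathcal{U}$. Inspecting the definitions, $z(k)=0$ for $0\le k\le m-1$ (there $x_k=u_k$ and $u$ solves~\eqref{LDE}) and for $k\ge m+r+1$ (there $x_k=0$), while for $m\le k\le m+r$ we have $z(k)=-L_k(x_k)$; since in this range the nonzero entries of $x_k$ lie among $u(m-r),\dots,u(m)$, it follows that $\|x_k\|\le\max_{m-r\le j\le m}|u(j)|=\|u_m\|=\|\phi\|$, whence $|z(k)|\le M\|\phi\|$ by~\eqref{funibdd}. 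Therefore $\sup_{k\ge 0}|z(k)|\le M\|\phi\|$.

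Since $x$ is a bounded solution of~\eqref{adm} with $x_0\in\mathcal{U}$, the uniqueness in Claim~\ref{LEM2} identifies $x$ with the function produced there, so~\eqref{cgt} gives $\sup_{k\ge -r}|x(k)|\le C\sup_{k\ge 0}|z(k)|\le CM\|\phi\|$. Because $x_n=u_n$ for $0\le n\le m$, we conclude
\[
\|T(n,m)\phi\|=\|u_n\|=\|x_n\|\le\sup_{k\ge -r}|x(k)|\le CM\|\phi\|,\qquad 0\le n\le m,
\]
so the claim holds with $Q':=CM$ (taking $n=m$ shows $CM\ge 1$). I do not expect a genuine obstacle here; the points that need care are the interpretation of the backward operator $T(n,m)$ through Claim~\ref{lem: invertibility}, and checking that the truncation $x$ has its defect $z$ supported on the short window $\{m,\dots,m+r\}$ with $\|x_k\|$ there controlled by $\|\phi\|$ — everything else is a routine repetition of the bookkeeping already carried out in the proofs of Claims~\ref{ee} and~\ref{C}.
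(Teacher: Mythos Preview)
Your proof is correct and follows essentially the same truncation-plus-Claim~\ref{LEM2} argument as the paper. In fact your estimate of the defect is sharper: by noting that $x(k+1)=0$ for $k\ge m$ and that the nonzero entries of $x_k$ in the window $m\le k\le m+r$ lie among $u(m-r),\dots,u(m)$, you obtain $\sup_{k\ge0}|z(k)|\le M\|\phi\|$ and hence $Q'=CM$, whereas the paper simply invokes ``proceeding as in the proof of Claim~\ref{C}'' to get the looser bound $2Me^{\omega r}\|\phi\|$ and $Q'=2CMe^{\omega r}$.
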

\begin{proof}[Proof of Claim~\ref{C-U}]
Given $m\in\mathbb Z^+_0$ and $\phi \in \mathcal U(m)$, there exists $\bar \phi \in \mathcal{U}$ such that $\phi=T(m,0)\bar \phi$.  Let $u\colon\mathbb Z^+_{-r}\to X$ be the solution of Eq.~\eqref{LDE} such that $u_0=\bar \phi$. Consider $x\colon \mathbb Z^+_{-r}\to X$ and $z\colon \mathbb Z^+_0\to X$ given by 
\[
x(k)=\begin{cases}
u(k)& \text{for $-r\leq k\leq m$}\\[10pt]
0 & \text{for $k\geq m+1$,}
\end{cases}
\]
and 
\[
z(k)=x(k+1)-L_k(x_k), \qquad  k\in \mathbb Z^+_0,
\]
so that \eqref{adm} is satisfied. Moreover, since $x(k)=0$ for $k\geq{m+1}$, it follows that $\sup_{k\geq {-r}}|x(k)|<\infty$.
Furthermore, $x_0=u_0\in\mathcal U$ and $z(k)=0$ for $0\leq k\leq m-1$ 
 and $k\geq m+r+1$. Proceeding as in the proof of Claim~\ref{C}, it can be shown that 
\[
\sup_{k\geq 0}|z(k)|\leq  2Me^{\omega r}\| \phi\|.
\]
From conclusion~\eqref{cgt} of Claim~\ref{LEM2}, we conclude that 
\[
\sup_{-r\leq k\leq m}|u(k)| \le \sup_{k\geq{-r}} |x(k)|  \le C \sup_{k\geq 0}|z(k)|\leq  2CMe^{\omega r}\| \phi\|.
\]
This implies that the conclusion of the claim holds with 
\[
Q':= 2CMe^{\omega r}>0.
\]
\end{proof}

\begin{Claim}\label{exp est unstable}
There exist $D', \lambda' >0$ such that 
\[
\|T(n,m)\phi \| \le D'e^{-\lambda' (m-n)}\|\phi\|
\]
for every $n,m\in \mathbb Z^+_0$ with $n\leq m$ and $\phi \in \mathcal U(m)$.
\end{Claim}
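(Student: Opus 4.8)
The plan is to run the proof of Claim~\ref{exp est stable} in reverse time, with the uniform \emph{backward} bound of Claim~\ref{C-U} playing the role that Claim~\ref{C} played there. The first step is a one-step estimate: there is an integer $N'>r+1$, depending only on $r$, $M$, $C$ and $Q'$, such that $\|T(n,m)\phi\|\le e^{-1}\|\phi\|$ whenever $n,m\in\mathbb Z^+_0$ with $m-n\ge N'$ and $\phi\in\mathcal U(m)$. Suppose this fails, so that $\|T(n,m)\phi\|>e^{-1}\|\phi\|$ for some such $n$, $m$ and $\phi$. Choose $\bar\phi\in\mathcal U$ with $\phi=T(m,0)\bar\phi$ and let $u\colon\mathbb Z^+_{-r}\to X$ be the solution of~\eqref{LDE} with $u_0=\bar\phi$; then $u_j=T(j,m)\phi\in\mathcal U(j)$ for $0\le j\le m$, $u_m=\phi$ and $u_n=T(n,m)\phi$. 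By Claim~\ref{C-U}, $\|u_j\|\le Q'\|\phi\|$ for $0\le j\le m$, and applying Claim~\ref{C-U} to the pair $(n,j)$ gives $\|u_n\|\le Q'\|u_j\|$, hence $\|u_j\|>0$, for $n\le j\le m$.

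Next, I would imitate the cutoff construction of Claim~\ref{exp est stable}, but with the ramp placed between times $n$ and $m-r-1$ rather than between $m$ and $n-r-1$: set $x(k)=\chi(k)u(k)$ for $k\ge-r$, where $\chi$ is nonincreasing, equal to $\bar\chi:=\sum_{j=n}^{m-r-1}\|u_j\|^{-1}$ on $[-r,n]\cap\mathbb Z$, equal to $\sum_{j=k}^{m-r-1}\|u_j\|^{-1}$ on $[n+1,m-r-1]\cap\mathbb Z$ (so it decreases from $\bar\chi$ down to $0$), and equal to $0$ on $\mathbb Z^+_{m-r}$ so that $x$ vanishes on the region $k\ge m-r$ where $u$ may be unbounded. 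Then $x$ is bounded on~$\mathbb Z^+_{-r}$, $x_0=\bar\chi\bar\phi\in\mathcal U$, and $x$ satisfies~\eqref{adm} with $z(k):=x(k+1)-L_k(x_k)$ supported on $n\le k\le m-1$, where $z(k)=L_k\bigl(\chi(k+1)u_k-\chi_ku_k\bigr)$. For each $\theta\in[-r,0]\cap\mathbb Z$ the value $\bigl(\chi(k+1)u_k-\chi_ku_k\bigr)(\theta)$ equals $\bigl(\sum_{j\in I}\|u_j\|^{-1}\bigr)u(k+\theta)$ for some index set $I\subseteq[n,m-r-1]\cap\mathbb Z$ with $|I|\le r+1$; since $|u(k+\theta)|\le\|u_k\|\le Q'\|\phi\|$, $\|u_j\|\ge\|u_n\|/Q'$ for $j\in I$, and $\|u_n\|>e^{-1}\|\phi\|$ by the contradiction hypothesis, each term $\|u_j\|^{-1}|u(k+\theta)|$ is at most $e(Q')^2$, so $\sup_{k\ge0}|z(k)|\le eM(r+1)(Q')^2$. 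By Claim~\ref{LEM2} this forces $\sup_{k\ge-r}|x(k)|\le eCM(r+1)(Q')^2$. On the other hand $\sup_{k\ge-r}|x(k)|\ge\bar\chi\|u_n\|\ge (m-r-n)\|u_n\|/(Q'\|\phi\|)>(m-r-n)/(eQ')$, so $m-r-n<e^2CM(r+1)(Q')^3$; choosing $N'>\max\{r+1,\ e^2CM(r+1)(Q')^3+r\}$ contradicts $m-n\ge N'$, which proves the one-step estimate.

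With such an $N'$ fixed, I would pass to the full estimate exactly as at the end of Claim~\ref{exp est stable}: given $n\le m$, write $m-n=kN'+h$ with $k\in\mathbb Z^+_0$ and $0\le h\le N'-1$, iterate the one-step estimate over the $k$ consecutive blocks of length $N'$ (using that $T(j,m)\phi\in\mathcal U(j)$ at each intermediate time) to obtain $\|T(n+h,m)\phi\|\le e^{-k}\|\phi\|$, and then apply Claim~\ref{C-U} once more to $T(n,n+h)$ to absorb the remaining $h$ steps, giving $\|T(n,m)\phi\|\le Q'e^{-k}\|\phi\|\le eQ'e^{-(m-n)/N'}\|\phi\|$. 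Hence the claim holds with $D'=eQ'$ and $\lambda'=1/N'$.

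The step I expect to be the main obstacle is the uniform bound on the error term $z(k)$. Because forward evolution on the subspaces $\mathcal U(j)$ is expanding, one cannot dominate $\|u_k\|$ by $\|u_j\|$ for $j\le k$ directly, as was possible in Claim~\ref{exp est stable}; instead the estimate must be routed through the endpoint $m$, bounding $\|u_k\|=\|T(k,m)\phi\|\le Q'\|\phi\|$ and $\|u_j\|\ge\|u_n\|/Q'$ and then closing the loop with the negated conclusion $\|u_n\|>e^{-1}\|\phi\|$. This is precisely the point where the uniform backward bound of Claim~\ref{C-U} is indispensable; once it is in place, the only genuinely delicate issue is getting the placement and monotonicity of the cutoff $\chi$ right so that $x$ stays bounded while $z$ stays uniformly bounded, and the rest is bookkeeping parallel to Claim~\ref{exp est stable}.
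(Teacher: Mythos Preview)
Your proof is correct and follows essentially the same strategy as the paper: a contradiction argument with a nonincreasing cutoff $\chi$, the bound from Claim~\ref{LEM2}, and a lower bound on $\|x_n\|$ coming from $\|u_j\|\le Q'\|\phi\|$, followed by iteration of the one-step estimate. The only differences are tactical: the paper lets $\chi$ ramp over $[0,m]$ (via an auxiliary cutoff $\psi$) rather than $[n,m-r-1]$, and it bounds each term $\|u_j\|^{-1}|u(k+\theta)|$ directly by $Q'$ using Claim~\ref{C-U} applied to $T(k+\theta,j)$, rather than routing through the contradiction hypothesis as you do; this gives the slightly sharper threshold $N'>eCM(Q')^2(r+1)$, but your argument is equally valid.
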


\begin{proof}[Proof of Claim~\ref{exp est unstable}]
We claim that if
\begin{equation}\label{NchoiceU}
N'>eCM(Q')^2(r+1)
\end{equation}
with~$Q'$ as in Claim~\ref{C-U}, then for every $m\geq N'$ and $\phi\in\mathcal U(m)$, 
\begin{equation}\label{eU}
\|T(n,m)\phi \| \le \frac 1 e \|\phi \|\qquad\text{whenever $0\leq n\leq m- N'$}.
\end{equation}
Suppose, for the sake of contradiction, that \eqref{NchoiceU} holds and there exist $m\geq N'$ and $\phi\in\mathcal U(m)$ such that
\begin{equation}\label{lowU}
\|T(n,m)\phi \|>\frac{1}{ e }\|\phi \|\qquad\text{for some~$n$ with $0\leq n\leq m-N'$}.
\end{equation}
Fix~$n$ with $0\leq n\leq m-N'$ such that the first inequality in~\eqref{lowU} holds.                        Evidently, \eqref{lowU} implies that $\phi\in\mathcal U(m)$ is nonzero. Therefore, there exists a nonzero $\bar \phi\in \mathcal{U}$ such that $\phi =T(m,0)\bar \phi$.
Let~$u$ denote the unique solution of Eq.~\eqref{LDE} with $u_0=\bar \phi$ so that $u_m=T(m,0)u_0=T(m,0)\bar\phi=\phi$. 
Since $u_n=T(n,m)u_m=T(n,m)\phi$, the first inequality in~\eqref{lowU} can be written as
\begin{equation}\label{lowupU}
\|u_{n}\|>\frac 1 e \|\phi\|.
\end{equation}
Choose a sequence $\psi\colon\mathbb Z^+_0\to[0,1]$ such that 
\begin{equation}\label{psidef}
\psi(j)=1\quad\text{for $0\leq j\leq m$}\qquad\text{and}\qquad\psi(j)=0\quad\text{for $j\geq m+1$}.
\end{equation}
By Claim~\ref{lem: invertibility}, $0\neq\bar\phi\in\mathcal U$ implies that $u_j=T(j,0)\bar\phi\neq0$ for $j\geq0$. Therefore, we can define a function $x\colon \mathbb Z^+_{-r}\to X$  by
\begin{equation*}
x(k)=\chi(k)u(k) \qquad \text{for $k\geq{-r}$},
\end{equation*}
where
\[
 \chi(k)=\begin{cases}
 \displaystyle \sum_{j=0}^{\infty} \psi(j)\|u_j\|^{-1}\qquad&\text{for $-r\leq k\leq0$}, \\[15pt]
 \displaystyle\sum_{j=k}^{\infty} \psi(j)\|u_j\|^{-1}
 \qquad&\text{for $k\geq1$}.
 \end{cases}
 \]
Note that 
\[x_0=cu_0=c\bar\phi\in\mathcal U,\qquad\text{where $c=\chi(0)=\sum_{j=0}^{m}\psi(j) \|u_j\|^{-1}$.}
\] 
Since $\psi(k)=0$ and hence $x(k)=0$ for $k\geq m+1$, we have that $\sup_{k\geq {-r}}|x(k)|<\infty $. Moreover, $x$ satisfies~\eqref{adm} with $z\colon \mathbb Z^+_0\to X$ defined by
\begin{equation}\label{eq: z aux}
    z(k)=x(k+1)-L_k(x_k),\qquad k\geq 0.   
\end{equation}
Since $x_k=0$ for $k\geq m+r+1$, it follows that $z(k)=0$ for $k\geq{m+r+1}$. In particular, $z$ is bounded on~$\mathbb Z^+_0$.  From~\eqref{eq: z aux} and Claim~\ref{LEM2}, we conclude that
\begin{equation}\label{eq: est x z U}
\sup_{k\geq{-r}}|x(k)|\leq C\sup_{k\geq{0}}|z(k)|.
\end{equation}
Let $k\geq0$ be arbitrary. By the same calculations as in the proof of Claim~\ref{C}, we have 
\[
\begin{split}
    z(k)&=L_k(\chi(k+1)u_k-\chi_ku_k).
\end{split}
\]
From this and~\eqref{funibdd}, we find that
\begin{equation}\label{eq: est zk U}
    |z(k)|\leq M\|\chi(k+1)u_k-\chi_ku_k\|.
\end{equation}
Let $\theta\in[-r,0]\cap\mathbb Z$. Then
\[
\begin{split}
    |\left(\chi(k+1)u_k-\chi_ku_k\right)(\theta)|&=|\left(\chi(k+1)-\chi (k+\theta)\right) u(k+\theta)|.
\end{split}
\]
From this and the definition of~$\chi$, taking into account that $0\leq\psi\leq1$ on $\mathbb Z^+_0$,  we conclude that
\begin{equation}\label{firstcase}
  |\left(\chi(k+1)u_k-\chi_ku_k\right)(\theta)|\leq \sum_{j=0}^{k} \|u_j\|^{-1}|u(k+\theta)| \qquad \text{whenever $k+\theta\leq 0$}.
\end{equation}
and
\begin{equation}\label{secondcase}
    |\left(\chi(k+1)u_k-\chi_ku_k\right)(\theta)|\leq \sum_{j=k+\theta}^{k} \|u_j\|^{-1}|u(k+\theta)| \qquad \text{whenever $k+\theta>0$}.
\end{equation}
By Claim~\ref{lem: invertibility}, $u_0=\bar\phi\in\mathcal U$ implies that $u_j=T(j,0)u_0\in\mathcal U(j)$ for $j\geq 0$. Therefore,  according to Claim~\ref{C-U}, if $k+\theta<0$, then 
\[
|u(k+\theta)|\leq\|u_{0}\|=\|T(0,j)u_j\|\leq Q'\|u_j\|
\]
and hence $\|u_j\|^{-1}|u(k+\theta)|\leq Q'$ for $j\geq0$. If $k+\theta\leq 0$ so that $k\leq-\theta\leq r$, then from the last ineaquality and~\eqref{firstcase}, we obtain that
\begin{equation}\label{fest}
  |\left(\chi(k+1)u_k-\chi_ku_k\right)(\theta)|\leq Q'(r+1) \qquad \text{whenever $k+\theta\leq0$}.
\end{equation}
It follows by similar arguments that if $k+\theta>0$, then 
\[
|u(k+\theta)|\leq\|u_{k+\theta}\|=\|T(k+\theta,j)u_j\|\leq Q'\|u_j\|
\]
and hence  $\|u_j\|^{-1}|u(k+\theta)|\leq Q'$ for $j\geq k+\theta$. This, together with~\eqref{secondcase}, yields
\begin{equation}\label{sest}
  |\left(\chi(k+1)u_k-\chi_ku_k\right)(\theta)|\leq Q'(r+1) \qquad \text{whenever $k+\theta>0$}.
\end{equation}
 Since $\theta\in [-r,0]\cap\mathbb Z$ was arbitrary, \eqref{fest} and~\eqref{sest} imply that
\[
\begin{split}
    \|\chi(k+1)u_k-\chi_ku_k\|\leq Q'(r+1).
\end{split}
\]
Since $k\geq 0$ was arbitrary, the last inequality, combined with~\eqref{eq: est zk U}, implies that
\[
\sup_{k\geq0}|z(k)|\leq MQ'(r+1).
\]
This, together with~\eqref{eq: est x z U}, yields 
\begin{equation}\label{impest}
\sup_{k\geq-r}|x(k)|\leq CMQ'(r+1).
\end{equation}
Since $\chi$ is nonincreasing on $\mathbb Z^+_{-r}$, we have for $\theta\in[-r,0]\cap\mathbb Z$,
\[
|x(n+\theta)|=\chi(n+\theta)|u(n+\theta)|\geq\chi(n)|u(n+\theta)|=|u(n+\theta)|\sum_{j=n}^m \|u_j\|^{-1},
\]
the last equality being a consequence of~\eqref{psidef}. Hence
\[
\|x_n\|\geq\|u_n\|\sum_{j=n}^m \|u_j\|^{-1}.
\]
Since $u_m=\phi\in\mathcal U(m)$, by Claim~\ref{C-U}, we have that $\|u_j\|=\|T(j,m)\phi\|\leq Q'\|\phi\|$ for $0\leq j\leq m$.
This, together with the previous inequality, gives
\[
\|x_n\|\geq\|u_n\|\frac{m-n+1}{Q'\|\phi\|}.
\]
This, combined with~\eqref{lowupU} and~\eqref{impest}, yields
\[
\begin{split}
   CMQ'(r+1)\geq \|x_n\|\geq\|u_n\|\frac{m-n+1}{Q'\|\phi\|}>
   \frac{m-n+1}{eQ'}.
\end{split}
\]
The last inequality contradicts the fact that $n-m\geq N'$ with $N'$ satisfying \eqref{NchoiceU}. Thus, \eqref{eU} holds whenever $m\geq N'$ and $\phi\in\mathcal U(m)$.

Now, using~\eqref{eU}, we can easily complete the proof.  Let  $0\leq n\leq m$ and $\phi \in \mathcal U(m)$. Choose an integer~$N'$ satisfying~\eqref{NchoiceU}. Then, $m-n=kN'+h$ for some $k\in \mathbb Z^+_0$ and $0\leq h\leq N'-1$. From~\eqref{eU} and Claim~\ref{C-U}, we obtain  
\[
\begin{split}
\|T(n,m)\phi\| &=\|T(n, n+kN'+h)\phi \|\\
&=\|T(n, n+kN')T(n+kN',n+kN'+h)\phi \| \\
&\le e^{-k}\|T(n+kN',n+kN'+h)\phi \|\\
&\le Q'e^{-k}\|\phi \|\\
&\le eQ'e^{-\frac{m-n}{N'}}\|\phi \|.
\end{split}
\]
Thus, the conclusion of the claim holds with $D'=eQ'$ and $\lambda'=1/N'$.
\end{proof}

For each $n\in \mathbb Z^+_0$, let $P_n$ denote  the projection of~$\mathcal B_r$ onto $\mathcal S(n)$ along~$\mathcal U(n)$ associated with the decomposition \eqref{SPLImainresult1}.

\begin{Claim}\label{Claim: projectionsbounded}
The projections $P_n$, $n\in \mathbb Z^+_0$, are uniformly bounded, i.e.,
\begin{equation*}
\sup_{n\geq 0}\|P_n\|<\infty.
\end{equation*}
\end{Claim}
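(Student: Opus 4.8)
The plan is to revisit the direct sum decomposition $\mathcal B_r=\mathcal S(n)\oplus\mathcal U(n)$ constructed in the proofs of Claims~\ref{ee} and~\ref{splitting}, and to upgrade that construction to a quantitative one by invoking the uniform estimate~\eqref{cgt} of Claim~\ref{LEM2}. The key observation is that, for a given $\phi\in\mathcal B_r$, the unstable component $(I-P_n)\phi\in\mathcal U(n)$ is exactly the solution segment $x_n$ produced in those proofs, and the forcing term $z$ feeding into Claim~\ref{LEM2} has supremum norm controlled by $M\|\phi\|$ with $M$ \emph{independent of $n$}, so that $\|x_n\|\le\|x\|_{\infty}\le C\|z\|_{\infty}\le CM\|\phi\|$ uniformly.

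Concretely, I would fix $n\in\mathbb Z^+_0$ and $\phi\in\mathcal B_r$ and define $v\colon\mathbb Z^+_{n-r}\to X$ and $z\colon\mathbb Z^+_0\to X$ exactly as in the proof of Claim~\ref{ee}: $v(k)=\phi(k-n)$ for $n-r\le k\le n$, $v(k)=0$ for $k\ge n+1$ (so that $v_n=\phi$), while $z(k)=0$ for $0\le k\le n-1$ and $z(k)=v(k+1)-L_k(v_k)$ for $k\ge n$. Since $v(k+1)=0$ whenever $k\ge n$, one has $z(k)=-L_k(v_k)$ for $k\ge n$, and $v_k=0$ for $k\ge n+r+1$; combining this with $\|v_k\|\le\|\phi\|$ and~\eqref{funibdd} gives $\sup_{k\ge0}|z(k)|\le M\|\phi\|$, with the same constant $M$ for every $n$.

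Next I would apply Claim~\ref{LEM2} to obtain the unique bounded $x\colon\mathbb Z^+_{-r}\to X$ with $x_0\in\mathcal U$ solving~\eqref{adm}, for which $\sup_{k\ge-r}|x(k)|\le C\sup_{k\ge0}|z(k)|\le CM\|\phi\|$. As in the proof of Claim~\ref{ee}, $x-v$ satisfies the homogeneous equation~\eqref{LDE} on~$\mathbb Z^+_n$, so that $x_k-v_k=T(k,n)(x_n-\phi)$ for $k\ge n$ and hence $x_n-\phi\in\mathcal S(n)$ because $x$ and $v$ are bounded; moreover $x_n=T(n,0)x_0\in\mathcal U(n)$ since $z$ vanishes on $[0,n-1]$ and $x_0\in\mathcal U$. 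By uniqueness of the decomposition~\eqref{SPLImainresult1}, this identifies $(I-P_n)\phi=x_n$ and $P_n\phi=\phi-x_n$. Therefore $\|(I-P_n)\phi\|=\|x_n\|\le\sup_{k\ge-r}|x(k)|\le CM\|\phi\|$ and $\|P_n\phi\|\le\|\phi\|+\|x_n\|\le(1+CM)\|\phi\|$, and since $n$ and $\phi$ were arbitrary, $\sup_{n\ge0}\|P_n\|\le1+CM<\infty$.

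I do not expect a genuine obstacle: the argument is a bookkeeping refinement of Claims~\ref{ee}, \ref{splitting} and~\ref{LEM2}, and it avoids the more cumbersome route of combining the exponential estimates of Claims~\ref{exp est stable} and~\ref{exp est unstable} with the finite-dimensionality of $\mathcal U(n)$. The only points needing care are that the bound $\sup_k|z(k)|\le M\|\phi\|$ be genuinely $n$-independent (which is where the uniform bound~\eqref{funibdd} on the functionals $L_n$ enters) and that the constant $C$ of Claim~\ref{LEM2} is, as stated there, independent of the forcing $z$ and hence of $\phi$ and $n$.
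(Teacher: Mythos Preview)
Your argument is correct and gives the explicit bound $\sup_{n\ge0}\|P_n\|\le 1+CM$. It is, however, a genuinely different route from the one taken in the paper. The paper does not revisit the construction of Claims~\ref{ee} and~\ref{splitting}; instead it observes that the exponential estimates already established in Claims~\ref{exp est stable} and~\ref{exp est unstable} place the family $(P_n)$ in the setting of \cite[Lemma~3.1]{HM} (applied with $A_n=T(n+1,n)$), and quotes that lemma to conclude uniform boundedness in one line. Your approach is more self-contained---it uses only the internal Claim~\ref{LEM2} and avoids both the external reference and the exponential bounds---and it yields an explicit constant; in particular it shows that Claim~\ref{Claim: projectionsbounded} is logically independent of Claims~\ref{exp est stable} and~\ref{exp est unstable} and could be proved immediately after Claim~\ref{splitting}. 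The paper's route, by contrast, is shorter on the page because the heavy lifting has already been done in the exponential-estimate claims and in \cite{HM}.
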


\begin{proof}[Proof of Claim~\ref{Claim: projectionsbounded}] Since $\operatorname{ker}P_n=\mathcal U(n)$ and $\operatorname{im}P_n=\mathcal S(n)$ for $n\in\mathbb Z^+_0$, Claims~\ref{exp est stable} and~\ref{exp est unstable} show that the hypotheses of Lemma~3.1 of Huy and Van Minh~\cite{HM} are satisfied with $X=\mathcal B_r$ and $A_n=T(n+1,n)$. Therefore, the desired conclusion follows from \cite[Lemma~3.1]{HM}. 
 \end{proof}

Now we can complete the proof of Theorem \ref{mainresult1}.
Let $\phi \in \mathcal B_r$ and $n,m\in \mathbb Z^+_0$ with $n\geq m$ be fixed. From $P_m\phi\in\mathcal S(m)$ and~\eqref{eq: invariance S U}, we have that $T(n,m)P_m\phi\in\mathcal S(n)$. Hence
\[
P_n T(n,m)P_m\phi=T(n,m)P_m\phi.
\]
Similarly, considering $Q_m=I-P_m$, $Q_m\phi\in\mathcal U(m)$ and~\eqref{eq: invariance S U} imply that $T(n,m)Q_m\phi\in\mathcal U(n)$. Hence
\[
P_n T(n,m)Q_m\phi=0.
\]
From the above relations, taking into account that $\phi=P_m\phi+Q_m\phi$, we conclude that
\[
P_nT(n,m)\phi=P_nT(n,m)P_m\phi+P_nT(n,m)Q_m\phi=T(n,m)P_m\phi.
\]
Since $\phi\in\mathcal B_r$ was arbitrary, this proves~\eqref{pro}.

Evidently, $\operatorname{ker}P_m=\mathcal U(m)$ for $m\in\mathbb Z^+$. Therefore, from
Claim~\ref{lem: invertibility} and~\eqref{eq: invariance S U}, it follows that the restriction $T(n,m)\rvert_{\operatorname{ker} P(m)} \colon \operatorname{ker} P(m)\to \operatorname{ker} P(n)$ is invertible and onto. Furthermore, by Claim \ref{Claim: projectionsbounded}, the projections $P_n$, $n\in \mathbb Z^+_0$, are uniformly bounded. Combining this fact with Claims \ref{exp est stable} and \ref{exp est unstable}, we conclude that the exponential estimates~\eqref{eq: def est stable} and \eqref{eq: def est unstable} are also satisfied. Thus, \eqref{LDE} admits an exponential dichotomy.  
\end{proof}

\begin{remark}\label{r4}
In the proof of Theorem~\ref{mainresult1}, we have shown that the Perron property (see Claim~\ref{LEM1}) implies the existence of an exponential dichotomy for  Eq.~\eqref{nef}. Results of this type have a long history that goes back to the pioneering works of Perron~\cite{Perron} for ordinary differential equations and Li~\cite{Li} for difference equations. Subsequent important contributions are due to Massera and Sch\"affer~\cite{MS1, MS2}, Dalecki\u \i  \ and   Kre\u \i n~\cite{DK}, Coppel~\cite{Coppel} and Henry~\cite{Henry}, who was the first to consider the case of noninvertible dynamics. For more recent contributions, we refer to~\cite{Huy, HM, LRS, MRS, SS1, SS2, SS3} and the references therein. A comprehensive overview of the relationship between hyperbolicity and the Perron property is given in~\cite{BDV}.
\end{remark}

\section{Shadowing of linear Volterra difference equations with infinite delay}\label{veid}
In this section, we are interested in the shadowing of the Volterra difference equation with infinite delay~\eqref{vde}, where the kernel~$A$ satisfies condition~\eqref{gammacond}.
The phase space for Eq.~\eqref{vde} is the Banach space $(\mathcal B_\gamma,\|\cdot\|)$ given by
\begin{equation*}
	\mathcal B_\gamma=\biggl\{\,\phi\colon\mathbb Z^-_0\to\mathbb C^d:\sup_{\theta\in\mathbb Z^-_0}|\phi(\theta)|e^{\gamma\theta}<\infty\,\biggr\},
	\qquad \|\phi\|=\sup_{\theta\in\mathbb Z^-_0}|\phi(\theta)| e^{\gamma\theta},\qquad\phi\in\mathcal B_\gamma.
\end{equation*}
Under condition~\eqref{gammacond}, Eq.~\eqref{vde} can be written equivalently 
in the form
\begin{equation}\label{mvde}
	x(n+1)=L(x_n),
\end{equation}
where  $x_n\in\mathcal B_\gamma$ is defined by $x_n(\theta)=x(n+\theta)$ for $\theta\in\mathbb Z^-_0$ and
$L\colon\mathcal B_\gamma\to\mathbb C^d$ is a bounded linear functional defined by
	\begin{equation*}
	L(\phi)=\sum_{j=0}^\infty A(j)\phi(-j),\qquad\phi\in\mathcal B_\gamma.
\end{equation*}
It is known (see, e.g.,~\cite{MMNN}, \cite{N}) that if~\eqref{gammacond} holds, then for every $\phi\in\mathcal B_\gamma$, there exists a unique function $x\colon\mathbb Z\to\mathbb C^d$ satisfying Eq.~\eqref{vde} (equivalently, Eq.~\eqref{mvde}) such that $x(\theta)=\phi(\theta)$ for $\theta\in\mathbb Z^-_0$. We shall call~$x$ the \emph{solution of Eq.~\eqref{vde} (or~\eqref{mvde}) on~$\mathbb Z^+_0$ with initial value $x_0=\phi$}. By a \emph{solution of Eq.~\eqref{vde} on~$\mathbb Z^+_0$}, we mean a solution~$x$ with initial value~$x_0=\phi$ for some $\phi\in\mathcal B_\gamma$.

For Eq.~\eqref{vde}, the definition of shadowing can be modified as follows.

\begin{definition}\label{voltshad}
We say that Eq.~\eqref{vde} is \emph{shadowable on~$\mathbb Z^+_0$} if for each $\epsilon >0$ there exists $\delta>0$ such that for every function $y\colon \mathbb Z\to \mathbb C^d$ satisfying
\[
\sup_{n\geq 0}|y(n+1)-L(y_n)| \le \delta, 
\]
there exists a solution $x$ of~\eqref{vde} on~$\mathbb Z^+_0$ such that  
\[
\sup_{n\geq 0} \|x_n-y_n\| \le \epsilon.
\]
\end{definition}
	
The main result of this section is the following theorem which shows that, under condition~\eqref{gammacond}, Eq.~\eqref{vde} is shadowable on~$\mathbb Z^+_0$ if and only if it is hyperbolic.

\begin{theorem}\label{mainresult2}
Suppose that~\eqref{gammacond} holds. Then, the following statements are equivalent.
\vskip5pt
\emph{(i)} Eq.~\eqref{vde} is shadowable on~$\mathbb Z^+_0$;
\vskip3pt

\vskip3pt

\emph{(ii)} The characteristic equation~\eqref{chareq} has no root on the unit circle $|\lambda|=1$.

\end{theorem}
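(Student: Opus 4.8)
The plan is to prove the equivalence by treating the nontrivial direction (i) $\Rightarrow$ (ii), since the converse (ii) $\Rightarrow$ (i) is exactly the content of \cite[Theorem~1]{DP} with $f=0$, $c=0$, $\mu=1$ (hyperbolicity of~\eqref{vde} in $\mathcal B_\gamma$ corresponds to its characteristic equation having no root on the unit circle, which is the standard spectral characterization for the autonomous functional difference equation~\eqref{mvde}). So assume~\eqref{vde} is shadowable on $\mathbb Z^+_0$; I want to show $\det\varDelta(\lambda)\neq 0$ for all $|\lambda|=1$. The key structural fact I would exploit is the duality between~\eqref{vde} and its formal adjoint equation established by Matsunaga \emph{et al.}~\cite{MMNN}: there is a bilinear pairing between $\mathcal B_\gamma$ and the phase space of the adjoint equation under which the solution semigroup of~\eqref{vde} is adjoint to that of the formal adjoint equation, and the characteristic functions of the two equations coincide (up to transpose), so a root on $|\lambda|=1$ of~\eqref{chareq} produces an eigenvector in the adjoint phase space.

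First I would argue by contrapositive: suppose $\det\varDelta(\lambda_0)=0$ for some $\lambda_0$ with $|\lambda_0|=1$. Then there is a nonzero $v\in\mathbb C^d$ with $\varDelta(\lambda_0)^{\!\top}v=0$ (choosing the transpose so that $v$ lives on the adjoint side), and this yields a bounded nonzero solution of the formal adjoint equation of the form $n\mapsto \lambda_0^{-n}v$ (or its natural analogue in the adjoint phase space), which via the Matsunaga duality is the data needed to build a bounded linear functional on $\mathcal B_\gamma$ that is, in a suitable sense, "invariant" under the solution operator of~\eqref{vde} with eigenvalue $\lambda_0$. Concretely, the pairing $\langle \Psi, x_n\rangle$ evaluated along solutions of~\eqref{vde} satisfies a first-order recursion with multiplier $\lambda_0$, i.e.\ $\langle\Psi,x_{n+1}\rangle = \lambda_0\langle\Psi,x_n\rangle$, and the same pairing applied to the nonhomogeneous (pseudo-)equation picks up the defect term $z(n)$ additively.

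The second step is the standard Perron-type obstruction. Using shadowing, mimic Claim~\ref{LEM1}: for every bounded $z\colon\mathbb Z^+_0\to\mathbb C^d$ there is a bounded $x\colon\mathbb Z\to\mathbb C^d$ with $x(n+1)=L(x_n)+z(n)$. Now pick a bounded $z$ for which the forced recursion $w(n+1)=\lambda_0 w(n)+\langle\Psi,z(n)\rangle$, equivalently $\langle\Psi,x_{n+1}\rangle=\lambda_0\langle\Psi,x_n\rangle+\langle\Psi,z(n)\rangle$, has only unbounded solutions — this is possible precisely because $|\lambda_0|=1$ is on the unit circle (for instance $\langle\Psi,z(n)\rangle=\lambda_0^{n}$ or $=\lambda_0^{-n}$ forces linear growth, as in the resonant case of a first-order scalar equation). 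Since $\Psi$ is a bounded functional on $\mathcal B_\gamma$ and $x_n$ is bounded in $\mathcal B_\gamma$, the sequence $\langle\Psi,x_n\rangle$ must be bounded — contradiction. Hence no root of~\eqref{chareq} lies on $|\lambda|=1$.

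The main obstacle I anticipate is making the duality of~\cite{MMNN} do precisely this job: I need the pairing $\langle\cdot,\cdot\rangle$ between $\mathcal B_\gamma$ and the adjoint phase space to be (a) continuous, so that boundedness of $x_n$ in $\mathcal B_\gamma$ transfers to boundedness of the scalar sequence, (b) compatible with the variation-of-constants structure, so that the defect $z(n)$ enters the scalar recursion as $\langle\Psi,z(n)\rangle$ with no error terms, and (c) such that a characteristic root $\lambda_0$ on the unit circle genuinely produces a nonzero eigenfunctional $\Psi$ (this is where one must verify that the adjoint characteristic function is $\det\varDelta(\lambda)$, as in~\cite{MMNN}, and that condition~\eqref{gammacond} guarantees the relevant series converge on $|\lambda|=1>e^{-\gamma}$). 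Once these three compatibility points are checked against~\cite{MMNN}, the resonance argument is routine. A secondary point worth care is the degenerate possibility that the natural $\Psi$ vanishes on all bounded solutions; one handles this by taking $\Psi$ from a Jordan-type generalized eigenspace if $\lambda_0$ is a multiple root, which still yields a polynomial-times-$\lambda_0^n$ resonance and the same contradiction.
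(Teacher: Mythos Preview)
Your proposal is correct and follows essentially the same route as the paper: establish a Perron-type property from shadowing (your analogue of Claim~\ref{LEM1}), then use the Matsunaga \emph{et al.}\ bilinear form to pair with an adjoint eigenfunctional at a unit-modulus characteristic root, reducing to a scalar resonant recursion $u(n+1)=\lambda_0 u(n)+c\lambda_0^{n+1}$ whose solutions grow linearly, contradicting boundedness.

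The one place where the paper is sharper than your sketch is the nondegeneracy issue. The forcing $p(n)\in\mathbb C^d$ enters the scalar recursion only through $\varPsi(0)p(n)$ (since $\langle\varPsi,\varGamma p(n)\rangle=\varPsi(0)p(n)$), so the obstacle is not that $\varPsi$ might ``vanish on bounded solutions'' but that $v\varPsi(0)$ might be zero, in which case no choice of $p$ produces resonance. The paper rules this out directly: from the adjoint relation $T^\sharp\varPsi=B\varPsi$ one gets $\varPsi(\zeta)=B^{-\zeta}\varPsi(0)$, so $v\varPsi(0)=0$ together with $vB=\lambda_0 v$ would force $v\varPsi(\zeta)=0$ for all $\zeta$, contradicting linear independence of the basis $\psi_1,\dots,\psi_s$. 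Your suggestion of passing to a generalized eigenspace would not address this particular degeneracy; the paper's argument is the right fix.
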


Before we give a proof of Theorem~\ref{mainresult2},  we summarize some facts from the spectral theory of linear Volterra difference equations with infinite delay (\cite{MMNN}, \cite{M}, \cite{N}).  

For each $n\in\mathbb Z^+_0$, define 
$T(n)\colon\mathcal B_\gamma\to\mathcal B_\gamma$ by $T(n)\phi=x_n(\phi)$ for $\phi\in\mathcal B_\gamma$, where $x(\phi)$ is the unique solution of Eq.~\eqref{vde} on~$\mathbb Z^+_0$ with initial value $x_0(\phi)=\phi$. It is well-known (see~\cite{MMNN, M}) that $T(n)$ is a bounded linear operator in~$\mathcal B_\gamma$ which has the semigroup property $T(0)=I$, the identity on~$\mathcal B_\gamma$, and $T(n+m)=T(n)T(m)$ for $n,m\in\mathbb Z^+_0$. As a consequence, we have that
\begin{equation*}
	T(n)=T^n,\qquad n\in\mathbb Z^+_0,\qquad\text{where\quad $T:=T(1)$}.
\end{equation*}
From the definition of the \emph{solution operator} $T=T(1)$ and Eq.~\eqref{vde}, we have that
\begin{equation}\label{tdef}
	[T(\phi)](\theta)=\begin{cases}
	\displaystyle\sum_{j=0}^\infty A(j)\phi(-j)&\qquad\text{for $\theta=0$,}\\
	\phi(\theta+1)&\qquad\text{for $\theta\leq-1$}.
	\end{cases}	
\end{equation}
If~\eqref{gammacond} holds, then the characteristic function~$\varDelta$ defined by~\eqref{charfunc} is an analytic function of the complex variable~$\lambda$ in the region $|\lambda|>e^{-\gamma}$.
Denote by~$\varSigma$ the set of characteristic roots of Eq.~\eqref{vde},
\begin{equation*}
	\varSigma=\{\,\lambda\in\mathbb C:|\lambda|>e^{-\gamma},\,\det\varDelta(\lambda)=0\,\},
\end{equation*}
and define
\begin{equation*}
	\varSigma^{cu}=\{\,\lambda\in\varSigma:|\lambda|\geq1\}.
\end{equation*}
It follows from the analycity of~$\varDelta$ that $\varSigma^{cu}$ is a finite spectral set for~$T$. The corresponding spectral projection~$\varPi^{cu}$ on~$\mathcal B_\gamma$ is defined by
\begin{equation*}
	\varPi^{cu}=\frac{1}{2\pi i}\int_C (\lambda I-T)^{-1}\,d\lambda,
\end{equation*}
where~$C$ is any rectifiable Jordan curve which is disjoint with~$\varSigma$ and contains~$\varSigma^{cu}$ in its interior, but no point of~$\varSigma^s:=\varSigma\setminus\varSigma^{cu}$. The phase space~$\mathcal B_\gamma$ can be decomposed into the direct sum
\begin{equation*}
	\mathcal B_\gamma=\mathcal B_\gamma^{cu}\oplus\mathcal B_\gamma^s
\end{equation*}
with $\mathcal B_\gamma^{cu}=\varPi^{cu}(\mathcal B_\gamma)$ and $\mathcal B_\gamma^{s}=\varPi^{s}(\mathcal B_\gamma)$, where $\varPi^{s}=I-\varPi^{cu}$. The subspaces $\mathcal B_\gamma^{cu}$ and $\mathcal B_\gamma^{s}$ are called the \emph{center-unstable subspace} and the \emph{stable subspace} of~$\mathcal B_\gamma$, respectively.
The spectra of the restrictions $T^{cu}:=T|_{\mathcal B_{\gamma}^{cu}}$ and $T^s:=T|_{\mathcal B_{\gamma}^{s}}$ satisfy 
\begin{equation*}
	\sigma(T^{cu})=\varSigma^{cu}\quad \text{ and } \quad\sigma(T^s)=\sigma(T)\setminus\varSigma^{cu}=\{\,\lambda\in\sigma(T):|\lambda|<1\,\}.
\end{equation*}
If $\varSigma^{cu}$ is nonempty, then it consists of finitely many eigenvalues of~$T$,
\begin{equation*}
	\varSigma^{cu}=\{\,\lambda_1,\dots,\lambda_r\,\},
\end{equation*}
and $\mathcal B_\gamma^{cu}$ can be written as a direct sum of the nullspaces
\begin{equation*}
	\mathcal B_\gamma^{cu}=\operatorname{ker}((T-\lambda_1 I)^{p_1})\oplus\cdots\oplus \operatorname{ker}((T-\lambda_r I)^{p_r}),
\end{equation*}
where $p_j$ is the index (ascent) of~$\lambda_j$, $j=1,\dots,r$. (see \cite[Remark~2.1, p.~62]{MMNN}).

An explicit representation of the spectral projection~$\varPi^{cu}$ can be given using the duality between Eq.~\eqref{vde} and its \emph{formal adjoint equation}
\begin{equation}\label{fae}
	y(n-1)=\sum_{j=0}^\infty y(n+j) A(j),\qquad n\in\mathbb Z^+_0,
\end{equation}
where $y(n)\in\mathbb C^{d*}$. Here $\mathbb C^{d*}$ denotes the $d$-dimensional space of complex row vectors with a norm~$|\cdot|$ which is compatible with the given norm on~$\mathbb C^d$, i.e., $|x^*x|\leq|x^*||x|$ for all $x\in\mathbb C^d$. The superscript ${}^*$ indicates the conjugate transpose. The phase space for the formal adjoint equation~\eqref{fae} is the Banach space 
$(\mathcal B_{\tilde\gamma}^\sharp,\|\cdot\|)$ defined by
\begin{equation*}
	\mathcal B_{\tilde\gamma}^\sharp=\biggl\{\,\psi\colon\mathbb Z^+_0\to\mathbb C^{d*}:\sup_{\zeta\in\mathbb Z^+_0}|\psi(\zeta)|e^{-\tilde\gamma\zeta}<\infty\,\biggr\},
	\quad \|\psi\|=\sup_{\zeta\in\mathbb Z^+_0}|\psi(\zeta)| e^{-\tilde\gamma\zeta},\quad\psi\in\mathcal B_{\tilde\gamma}^\sharp,
\end{equation*}
where $\tilde\gamma$ is a fixed number such that $0<\tilde\gamma<\gamma$. The solution operator $T^\sharp\colon\mathcal B_{\tilde\gamma}^\sharp\to\mathcal B_{\tilde\gamma}^\sharp$ of Eq.~\eqref{fae} is given by
\begin{equation}\label{tsharpdef}
	[T^\sharp(\psi)](\zeta)=\begin{cases}
	\displaystyle\sum_{j=0}^\infty \psi(j)A(j)&\qquad\text{if $\zeta=0$,}\\
	\psi(\zeta-1)&\qquad\text{if $\zeta\geq1$}.
	\end{cases}	
\end{equation}
Define a \emph{bilinear form} $\langle\cdot,\cdot\rangle\colon\mathcal B_{\tilde\gamma}^\sharp\times \mathcal B_\gamma\to\mathbb C$ by
\begin{equation*}
\langle\psi,\phi\rangle=\psi(0)\phi(0)+\sum_{j=1}^\infty\sum_{\zeta=0}^{j-1}\psi(\zeta+1)A(j)\phi(\zeta-j),
\qquad\phi\in\mathcal B_\gamma,\quad\psi\in	\mathcal B_{\tilde\gamma}^\sharp.
\end{equation*}
As shown in \cite[Ineq.~(3.3), p.~64]{MMNN}, this bilinear form is bounded, i.e., there exists $K>0$ such that
\begin{equation}\label{boudbil}
	|\langle\psi,\phi\rangle|\leq K\|\psi\|\|\phi\|,
	\qquad\phi\in\mathcal B_\gamma,\quad\psi\in	\mathcal B_{\tilde\gamma}^\sharp.
\end{equation}
Moreover, between Eqs.~\eqref{vde} and~\eqref{fae}, we have the following \emph{duality relation} (see \cite[Lemma~3.1]{MMNN})
\begin{equation}\label{dual}
\langle\psi,T\phi\rangle=\langle T^\sharp\psi,\phi\rangle,\qquad \phi\in\mathcal B_\gamma,\quad\psi\in\mathcal B_{\tilde\gamma}^\sharp.
\end{equation}
It is known that~$T$ and~$T^\sharp$ have the same spectrum and the dimension of the subspace
\begin{equation*}
	\mathcal N^\sharp:=\operatorname{ker}((T^\sharp-\lambda_1 I)^{p_1})\oplus\cdots\oplus \operatorname{ker}((T^\sharp-\lambda_r I)^{p_r})
\end{equation*}
of~$\mathcal B_{\tilde\gamma}^\sharp$ is the same as the (finite) dimension of the center-unstable subspace~$\mathcal B_\gamma^{cu}$, which will be denoted by~$s$. Let $\{\,\phi_1,\dots,\phi_s\,\}$ and $\{\,\psi_1,\dots,\psi_s\,\}$ be bases for~$\mathcal B_\gamma^{cu}$ and~$\mathcal N^\sharp$, respectively. Define $\varPhi=(\phi_1,\dots,\phi_s)$ and $\varPsi=\operatorname{col}(\psi_1,\dots,\psi_s)$.
Then, the $s\times s$ matrix $\langle\varPsi,\varPhi\rangle$ given by $\langle\varPsi,\varPhi\rangle=(\langle\psi_i,\phi_j\rangle)_{i,j=1,\dots,s}$
is nonsingular, therefore, by replacing~$\varPsi$ with $\langle\varPsi,\varPhi\rangle^{-1}\varPsi$, we may (and do) assume that $\langle\varPsi,\varPhi\rangle=E$, the $s\times s$ unit matrix. The projection
$\varPi^{cu}\colon\mathcal B_\gamma\to\mathcal B_\gamma^{cu}$ can be given explicitly by (see \cite[Theorem~3.1]{MMNN})
\begin{equation}\label{exproj}
	\varPi^{cu}\phi=\varPhi\langle\varPsi,\phi\rangle,\qquad\phi\in\mathcal B_\gamma,
\end{equation}
where $\langle\varPsi,\phi\rangle$ denotes the column vector $\operatorname{col}(\langle\psi_1,\phi\rangle,\dots,\langle\psi_s,\phi\rangle)$.

The subspace~$\mathcal B_\gamma^{cu}$ is invariant under the solution operator~$T$. If~$B$ 
denotes the representation matrix of the linear transformation $T|_{\mathcal B_\gamma^{cu}}$ with respect to the basis~$\varPhi$ of~$\mathcal B_\gamma^{cu}$, then 
\begin{equation}\label{tphi}	
T\varPhi=\varPhi B\qquad\text{and}\qquad \sigma(B)=\varSigma^{cu}.
\end{equation}
 A similar argument yields the existence of a square matrix~$C$
such that 
\begin{equation}\label{tsharpsi}	
T^\sharp\varPsi=C\varPsi\qquad\text{and}\qquad \sigma(C)=\varSigma^{cu}.
\end{equation}
Now suppose that $x$ is a solution of the nonhomogeneous equation
\begin{equation}\label{nvde}
x(n+1)=L(x_n)+p(n), \qquad n\in \mathbb Z^+_0.
\end{equation}
with initial value $x_0=\phi$ for some $\phi\in\mathcal B_\gamma$. Then, $x$ satisfies the following representation formula in~$\mathcal B_\gamma$ (see \cite[Theorem~2.1]{M}), which is called the \emph{variation of constants formula} for Eq.~\eqref{nvde} in the phase space, 
\begin{equation}\label{vcf}
x_n=T(n)\phi+\sum_{j=0}^{n-1}T(n-1-j)\varGamma p(j),\qquad n\in\mathbb Z^+_0,	
\end{equation}
where the operator $\varGamma\colon\mathbb C^d\to\mathcal B_\gamma$ is defined by
\begin{equation}\label{Gdef}
	[\varGamma x](\theta)=\begin{cases}
	x&\qquad\text{if $\theta=0$,}\\
	0&\qquad\text{if $\theta\leq-1$}.
	\end{cases}	
\end{equation}
Evidently, 
\begin{equation}\label{Gamisom}
	\|\varGamma x\|=|x|,\qquad x\in\mathbb C^d.
\end{equation}
Finally, let~$z(n)$ be the coordinate of the projection $\varPi^{cu}x_n$ with respect to the basis~$\varPhi$, i.e., $\varPi^{cu}x_n=\varPhi z(n)$ for $n\in\mathbb Z^+_0$. In view of~\eqref{exproj}, $z(n)$ is given explicitly by\begin{equation}\label{zdef}
z(n)=\langle\varPsi,x_n\rangle,\qquad n\in\mathbb Z^+_0.
	\end{equation}
Moreover, it is known (see \cite[Theorem~3]{N}) that $z$ satisfies the following first order difference equation in~$\mathbb C^s$,
\begin{equation}\label{cueq}
	z(n+1)=Bz(n)+\langle\varPsi,\varGamma p(n)\rangle,\qquad n\in\mathbb Z^+_0,
	\end{equation}
with~$B$ as in~\eqref{tphi}.

Now we are in a position to give a proof of Theorem~\ref{mainresult1}.
\begin{proof}[Proof of Theorem~\ref{mainresult1}]

$(i)\Rightarrow(ii)$. Suppose, for the sake of contradiction, that Eq.~\eqref{vde} is shadowable, but~(ii) does not hold. The shadowing property of Eq.~\eqref{vde} implies the following  Perron type property.
\begin{Claim}\label{fc}
For every bounded function $p\colon \mathbb Z^+_0\to \mathbb C^d$, 
there exists a function $x\colon \mathbb Z\to\mathbb C^d$ satisfying the nonhomogeneous equation~\eqref{nvde} with
\begin{equation}\label{bdd}
	\sup_{n\geq 0}|x(n)|<\infty.
\end{equation}
\end{Claim}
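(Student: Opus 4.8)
The plan is to repeat, almost verbatim, the argument used to establish the Perron type property in Claim~\ref{LEM1}, now working in the phase space $\mathcal B_\gamma$ and invoking Definition~\ref{voltshad} instead of Definition~\ref{shadow}. Let $p\colon\mathbb Z^+_0\to\mathbb C^d$ be an arbitrary bounded function. If $p\equiv 0$, then $x\equiv 0$ satisfies \eqref{nvde} and \eqref{bdd}, so we may assume $\|p\|_\infty:=\sup_{n\geq 0}|p(n)|>0$. First I would fix a constant $\delta>0$ corresponding to the choice $\epsilon=1$ in Definition~\ref{voltshad}.

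Next I would introduce the rescaled forcing term $\tfrac{\delta}{\|p\|_\infty}p$ and let $y\colon\mathbb Z\to\mathbb C^d$ be the unique solution of the nonhomogeneous equation $y(n+1)=L(y_n)+\tfrac{\delta}{\|p\|_\infty}p(n)$, $n\in\mathbb Z^+_0$, with initial value $y_0=0$ (so $y(\theta)=0$ for $\theta\in\mathbb Z^-_0$); existence and uniqueness follow as for \eqref{mvde} under hypothesis \eqref{gammacond}. By construction,
\[
\sup_{n\geq 0}|y(n+1)-L(y_n)|=\frac{\delta}{\|p\|_\infty}\sup_{n\geq 0}|p(n)|=\delta,
\]
so $y$ is an admissible pseudosolution in the sense of Definition~\ref{voltshad}, and the assumed shadowing property yields a genuine solution $\tilde x$ of Eq.~\eqref{vde} on~$\mathbb Z^+_0$ with $\sup_{n\geq 0}\|\tilde x_n-y_n\|\leq 1$. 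Evaluating the $\mathcal B_\gamma$-norm at $\theta=0$ gives $|\tilde x(n)-y(n)|\leq\|\tilde x_n-y_n\|\leq 1$ for every $n\in\mathbb Z^+_0$.

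Finally I would set $x(n):=\tfrac{\|p\|_\infty}{\delta}\bigl(y(n)-\tilde x(n)\bigr)$ for $n\in\mathbb Z$. The linearity of the functional $L$ shows at once that $x$ satisfies \eqref{nvde}, while the bound from the previous step gives $\sup_{n\geq 0}|x(n)|\leq\|p\|_\infty/\delta<\infty$, which is exactly \eqref{bdd}. I do not expect any serious obstacle: the proof is a direct transcription of Claim~\ref{LEM1}, and the only point needing a word of care is that the shadowing estimate is phrased in the $\mathcal B_\gamma$-norm of the history segments, so one must observe --- as above --- that its $\theta=0$ component already controls the pointwise values of $y-\tilde x$ on~$\mathbb Z^+_0$, which is all that \eqref{bdd} requires.
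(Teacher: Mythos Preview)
Your proposal is correct and takes essentially the same approach as the paper, which simply states that the proof of this claim is almost identical with that of Claim~\ref{LEM1} and omits it. Your added remark that the $\mathcal B_\gamma$-norm evaluated at $\theta=0$ dominates $|\tilde x(n)-y(n)|$ is exactly the small adaptation needed when passing from the finite-delay phase space to~$\mathcal B_\gamma$.
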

The  proof of Claim~\ref{fc} is almost identical with that of Claim~\ref{LEM1} in the of Theorem~\ref{mainresult1}, therefore we omit it. 
Since~(ii) does not hold, there exists a characteristic root $\lambda\in\varSigma$ with $|\lambda|=1$. Evidently, $\lambda\in\varSigma^{cu}$, therefore the second relation in~\eqref{tphi} implies the existence of a nonzero vector $v\in\mathbb C^{s*}$ such that
\begin{equation}\label{veigen}
vB=\lambda v.
\end{equation}
Define $p\colon\mathbb Z^+_0\to\mathbb C^d$ by
\begin{equation}\label{pdef}
p(n)=\lambda^{n+1}(v\varPsi(0))^*,\qquad n\in\mathbb Z^+_0.	
\end{equation}
Since $\sup_{n\geq 0}|p(n)|=|(v\varPsi(0))^*|<\infty$, by Claim~\ref{fc}, the nonhomogeneous equation~\eqref{nvde} has at least one solution~$x$ on~$\mathbb Z^+_0$ such that~\eqref{bdd} holds. The corresponding coordinate function $z$ defined by~\eqref{zdef} satisfies the difference equation~\eqref{cueq}. From~\eqref{boudbil}, \eqref{Gamisom},  \eqref{zdef} and~\eqref{bdd}, we obtain that~$z$ and hence the function $u\colon\mathbb Z^+_0\to\mathbb C$ defined by
\begin{equation}
	u(n)=vz(n),\qquad n\in\mathbb Z^+_0,
\end{equation}
is bounded on~$\mathbb Z^+_0$. Multiplying Eq.~\eqref{cueq} from the left by~$v$ and using~\eqref{veigen}, we obtain that
\begin{equation}\label{ueq}
	u(n+1)=\lambda u(n)+v\langle\varPsi,\varGamma p(n)\rangle,\qquad n\in\mathbb Z^+_0.
\end{equation}
It follows from~\eqref{Gdef}, \eqref{pdef} and the definition of the bilinear form $\langle\cdot,\cdot\rangle$ that
\begin{equation*}
	\langle\varPsi,\varGamma p(n)\rangle=\varPsi(0)p(n)=\lambda^{n+1}\varPsi(0)(v\varPsi(0))^*.
\end{equation*}
This, together with~\eqref{ueq}, implies that
\begin{equation}\label{newueq}
	u(n+1)=\lambda u(n)+c\lambda^{n+1},\qquad n\in\mathbb Z^+_0,
\end{equation}
with
\begin{equation}\label{cdef}
c=(v\varPsi(0))(v\varPsi(0))^*=|(v\varPsi(0))^*|_2^2,
\end{equation}
where $|\cdot|_2$ denotes the $l_2$-norm on~$\mathbb C^d$. From Eq.~\eqref{newueq}, it follows by the variation of constants formula that
\begin{equation}\label{newestueq}
	u(n)=\lambda^{n}(u(0)+cn),\qquad n\in\mathbb Z^+_0.
\end{equation}
It follows from the duality~\eqref{dual} that $B=C$, where $B$ and~$C$ have the meaning from~\eqref{tphi} and~\eqref{tsharpsi}, respectively. Indeed, \eqref{dual} implies that
\begin{equation*}
	B=\langle \varPsi,\varPhi\rangle B
	=\langle\varPsi,\varPhi B\rangle
	=\langle\varPsi,T\varPhi\rangle
	=\langle T^\sharp\varPsi,\varPhi\rangle
	=\langle C\varPsi,\varPhi\rangle
	=C\langle\varPsi,\varPhi\rangle
	=C.
\end{equation*}
From~\eqref{tdef} and the relation $T\varPhi=\varPhi B$ (see~\eqref{tphi}), we find that
\begin{equation}\label{curep}
\varPhi(\theta)=\varPhi(0)B^\theta,\qquad\theta\in\mathbb Z^-_0.
\end{equation}
Similarly, from~\eqref{tsharpdef} and the relation $T^\sharp\varPsi=B\varPsi$ (see~\eqref{tsharpsi}), we have that
\begin{equation}\label{acurep}
\varPsi(\zeta)=B^{-\zeta}\varPsi(0),\qquad\zeta\in\mathbb Z^+_0.
\end{equation}
Next we will show that
\begin{equation}\label{nonzero}
	v\varPsi(0)\neq0.
\end{equation}
Suppose, for the sake of contradiction, that $v\varPsi(0)=0$. Then, by~\eqref{veigen} and~\eqref{acurep}, we have 
\begin{equation*}
v\varPsi(\zeta)=vB^{-\zeta}\varPsi(0)=\lambda^{-\zeta}v\varPsi(0)=0\qquad\text{for all $\zeta\in\mathbb Z^+_0$}.	
\end{equation*}
Thus, $v\varPsi$ is identically zero on~$\mathbb Z^+_0$. On the other hand, $v\neq0$ implies that $v\varPsi$ is a nontrivial linear combination of the basis elements $\psi_1,\dots,\psi_s$ of~$\mathcal N^\sharp$ and hence it cannot be identically zero on~$\mathbb Z^+_0$. This contradiction proves that~\eqref{nonzero} holds.

From~\eqref{cdef} and~\eqref{nonzero}, we find that $c>0$. From this and~\eqref{newestueq}, taking into account that $|\lambda|=1$, we obtain that
\begin{equation*}
	|u(n)|=|u(0)+cn|\rightarrow\infty,\qquad n\to\infty,
\end{equation*}
which contradicts the boundedness of~$u$.

$(ii) \Rightarrow (i)$. Suppose that
the characteristic equation~\eqref{chareq} has no root with $|\lambda|=1$. Then, the exponential estimates of the solution operator on the stable and unstable subspaces of~$\mathcal B_\gamma$ (see, e.g., \cite[Theorem~1]{N}) imply that Eq.~\eqref{vde} admits an exponential dichotomy on~$\mathbb Z^+_0$ as defined in~\cite{DP} with projections $P_n=\varPi^s$ for $n\in\mathbb Z^+_0$. By the application of \cite[Theorem~1]{DP} with $f_n=0$, $c=0$ and $\mu=1$, we conclude that Eq.~\eqref{vde} is shadowable on~$\mathbb Z^+_0$.
\end{proof}

\begin{remark}\label{r5}
It is known that certain solutions of Eq.~\eqref{vde} can be continued backward in the sense that they satisfy Eq.~\eqref{vde} for all~$n\in\mathbb Z$. Such solutions are sometimes called \emph{global solutions} or \emph{entire solutions}.
In a recent paper~\cite{BV}, Barreira and Valls have considered the Ulam--Hyers stability, a special case of shadowing, of the global solutions for difference equations with finite delays. In \cite[Theorem~5]{BV}, they have proved the analogue of a recent shadowing theorem~\cite[Theorem~1]{DP} for global solutions. Moreover, in the special case of linear autonomous equations in finite dimensional spaces they have shown that the Ulam--Hyers stability of the global solutions is equivalent to the existence of an exponential dichotomy whenever the coefficients are scalar (see \cite[Theorem~8]{BV}) or the Jordan blocks associated with the central directions are diagonal (see \cite[Theorem~9]{BV}). 
Using a similar argument as in the proof of Theorem~\ref{mainresult2}, it can be shown that the assumption about the Jordan blocks in \cite[Theorem~9]{BV} is superfluous.
In this sense, Theorem~\ref{mainresult2} may be viewed as an improvement of \cite[Theorem~9]{BV}. 
\end{remark}

\section*{Acknowledgements}
L. Backes was partially supported by a CNPq-Brazil PQ fellowship under Grant No. 307633/2021-7.
D.~Dragi\v cevi\' c was supported in part by University of Rijeka under the project uniri-iskusni-prirod-23-98
3046.
M.~Pituk was supported by the Hungarian National Research, Development and Innovation Office grant no.~K139346.

\end{document}